\theoremstyle{plain}
\newtheorem{theorem}{Theorem}[section]
\newtheorem{proposition}[theorem]{Proposition}
\theoremstyle{definition}
\newtheorem{remark}[theorem]{Remark}
\numberwithin{equation}{section}
\newcommand{\expec}[1]{\mathbb{E}[#1]}
\newcommand{\Expec}[1]{\mathbb{E}\Big[#1\Big]}
\newcommand{\Id}{\rm{\sf i }}
\newcommand{\R}{{\mathbb R}}
\newcommand{\Z}{{\mathbb Z}}
\newcommand{\N}{{\mathbb N}}
\newcommand{\loc}{{\mathrm{loc} }}
\newcommand{\uloc}{{\mathrm{uloc} }}
\newcommand{\Pc}{{\mathcal P}}
\newcommand{\calQ}{{\mathcal Q}}
\newcommand{\B}{{\mathcal B}}
\newcommand\T{{\mathscr T}}
\renewcommand\S{{\mathscr S}}
\newcommand\M{{\mathscr M}}
\newcommand\I{{\mathscr I}}
\newcommand\ds{\displaystyle}
\newcommand{\Msym}{{\bf M}^{N}_{\rm sym}}
\newcommand\Mb{\mathcal M_b}
\newcommand{\Om}{\Omega}
\newcommand{\Omb}{\overline{\Omega}}
\newcommand{\weak}{\rightharpoonup}
\newcommand{\e}{\varepsilon}
\newcommand\ed{\e^\delta}
\newcommand{\ph}{\varphi}
\newcommand\phd{\ph^\delta}
\newcommand\td{\tau^\delta}
\newcommand\qd{q^\delta}
\newcommand\wdj{w_j^\delta}
\newcommand\xd{(\frac x\delta)}
\newcommand\sd{\sigma^\delta}
\newcommand\thd{\theta^\delta}
\newcommand\zd{\zeta^\delta}
\newcommand\Ld{L^\delta}
\newcommand\ud{u^\delta}
\newcommand\tud{\tilde u^\delta}
\newcommand\Md{M^\delta}
\newcommand\Wdj{W_j^\delta}
\newcommand\teh{\tilde\e^h}
\newcommand\diam{\mathrm{diam}\,}
\newcommand{\be}[1]{\begin{equation}\label{#1}}
\newcommand{\ee}{\end{equation}}
\newcommand{\ba}{\left\{\begin{array}}
\newcommand{\ea}{\end{array}\right.}
\newcommand{\de}[2]{\frac{\partial #1}{\partial #2}}
\newcommand{\dvg}{\operatorname{div}}
\title [Enhancement in homogenization of elasto-dielectrics]{Enhancement of  elasto-dielectrics
by homogenization of active charges}
\author[G.A. Francfort] {Gilles A. Francfort}
\address[Gilles Francfort]{LAGA, Universit\'e Paris-Nord,
Avenue J.-B. Cl\'ement 93430 - Villetaneuse, France}
\email[G. Francfort]{gilles.francfort@univ-paris13.fr}
\author[A. Gloria] {Antoine Gloria}
\address[Antoine Gloria]{Sorbonne Universit\'e, CNRS, Universit\'e de Paris, Laboratoire Jacques-Louis Lions, 75005 Paris, France \& Institut Universitaire de France (IUF) \& Universit\'e Libre de Bruxelles, D\'epartement de Math\'ematique, 1050 Brussels, Belgium}
\email[A. Gloria]{gloria@ljll.math.upmc.fr}
\author[O. Lopez-Pamies] {Oscar Lopez-Pamies}
\address[Oscar Lopez-Pamies]{Department of Civil and Environmental Engineering,  University of Illinois at Urbana--Champaign, IL 61801, USA}
\email[O. Lopez-Pamies]{pamies@illinois.edu}
\begin{document}
\vskip .2truecm
\begin{abstract}{\vskip .3truecm  We investigate the PDE system resulting from  even electromechanical coupling in elastomers. Assuming a periodic microstructure and a periodic distribution of micro-charges of a prescribed order, we derive the homogenized system. The results depend crucially on periodicity (or adequate randomness) and on the type of microstructure under consideration. {We also  show electric
enhancement if the charges are carefully tailored to the homogenized electric field and explicit that enhancement, as well as the corresponding  electrostrictive enhancement in a dilute regime.}

\noindent Keywords: Homogenization, elliptic regularity, dilute regime.}
\end{abstract}

\maketitle
\centerline{\scriptsize  Version of \today}

\section{Introduction}

Ever since the discovery of the piezoelectric behavior of several types of minerals --- including quartz, tourmaline, and Rochelle salt --- by Pierre and Jacques Curie in the 1880s \cite{Curie1880,Curie1881}, deformable dielectrics have been an object  of uninterrupted interest in  fields ranging from materials science to mathematics. This has been reinforced since the turn of the millennium when soft organic dielectrics were ``re-discovered'' as a class of materials with high technological potential.

In contrast to the {odd} coupling between  mechanical and electric field, a characteristic of the hard deformable dielectrics investigated by the Curie brothers, soft organic dielectrics typically exhibit {even} electromechanical coupling. From a mathematical point of view, this means that the governing equations involved   exhibit nonlinearity, even in the simplest asymptotic setting of small deformations. Furthermore,   space charges  varying at the length scale of the microstructure may assert their presence, as is the case, for example, in porous polymer electrets \cite{Bauer,Hillenbrand} and polymer nano-particulate composites \cite{Huang,Nelson}. This  translates into  equations that contain a rapidly oscillating source term and leads  to anomalous behaviors \cite{GGLP,LGMO}.

Our goal in this study is to investigate the homogenization of elasto-dielectrics with even electromechanical coupling that contain space charges that vary at the length scale of their microstructure; a  formal analysis of that problem was presented in \cite{LLP}. In addition to ignoring dissipative effects, we restrict attention to materials with periodic (or with adequate random) microstructure, quasi-static electromechanical loading conditions, and further focus on the asymptotic setting of small deformations and moderate electric fields. The derivation of the relevant local governing equations goes as follows.

Consider an elastic dielectric that occupies a bounded domain $\Om\subset\mathds{R}^N$ with boundary $\partial\Om$ in its undeformed, stress-free, and polarization-free ground state. Material points are identified by their initial position vector $x$ in $\Om$ relative to some fixed point. Upon  application of mechanical loads and electric fields, the position vector $x$ of a material point moves to a new position  $v(x) = x + u(x)$, where $u$ denotes the displacement field. The associated deformation gradient is denoted by $F(x)= I + \nabla u(x)$. In the absence of magnetic fields, free currents, and body forces, and with no time dependence (see, e.g., \cite{Ogden}), Maxwell's and the momentum balance equations require that
$$\begin{cases}
{\rm div}  D=Q,\; {\rm curl}\, E=0, \;x\in \mathds{R}^N\\[2mm]
{\rm div}\, S=0,\; S F^T=F S^T,\; x\in \Om,
\end{cases}$$
where $D(x)$, $E(x)$, $S(x)$ stand for the Lagrangian electric displacement field, the Lagrangian electric field, and the ``total'' first Piola-Kirchhoff stress tensor, while
$Q(x)$ stands for the density (per unit undeformed volume) of space charges. Further,
\begin{equation*}
D(x)=-\dfrac{\partial W}{\partial E}(x,F(x),E(x))\qquad {\rm and}\qquad S(x)=\dfrac{\partial W}{\partial F}(x,F(x),E(x)),
\end{equation*}
where the ``total'' free energy $W(x,F,E)$ is an objective function of the deformation gradient tensor $F$ and an {\it even} and objective function of the electric field $E$, namely, $W(x,F,E)=W(x,QF,E)=W(x,F,-E)$ for all $Q\in SO(N)$ and arbitrary $F$ and $E$. The objectivity of $W$ implies that the balance of angular momentum $S F^T=F S^T$ is  automatically satisfied. Faraday's law ${\rm curl}\, E=0$ can also be satisfied automatically by the introduction of an electric potential $\varphi(x)$ such that $E(x)=-\nabla\varphi(x)$. Thus, only Gauss's law ${\rm div} \, D=Q$ and the balance of linear momentum ${\rm div}\, S=0$ remain.

Now, setting $H:=F-I$, a Taylor expansion of $W$ about the ground state $F=I$,  $E=0$ yields
\begin{equation*}
W(x,F,E)=-\dfrac{1}{2}E\cdot\varepsilon(x)E+\dfrac{1}{2}H\cdot L(x)H+H\cdot (M(x)(E\otimes E))-E\otimes E\cdot(\mathcal{T}(x)(E\otimes E))+\ldots,
\end{equation*}
where $\varepsilon(x):=-\partial^2W(x,I,0)/\partial E^2$ is the permittivity tensor, $L(x):=\partial^2W(x,I,0)/\partial F^2$ is the elasticity tensor, $M(x):=1/2\partial^3W(x,I,0)/\partial F\partial E^2$ is the electrostriction tensor, and $\mathcal{T}(x):=-1/24$ $\partial^4W(x,I,0)/\partial E^4$ is the permittivity tensor of second order. It follows that the constitutive relations that describe the electromechanical response of the elastic dielectric specialize to
\begin{eqnarray*}
D(x)=\varepsilon(x) E(x)+{H(x)\cdot M(x)E(x)}+ \mathcal{T}(x)(E(x)\otimes E(x)\otimes E(x))+\ldots\\[2mm] S(x)=L(x)H(x)+M(x)(E(x)\otimes E(x))+\ldots
\end{eqnarray*}
Taking the magnitude of the deformation measure $H$ to be of order $\zeta$, with $0<\zeta<\!<1$, it follows in turn that the electric field $E$ must be of order $\zeta^{1/2}$ if  the elastic dielectric  is to display electromechanical coupling around its ground state. To leading order, we then get
\begin{equation*}
D(x)=\varepsilon(x) E(x)\qquad {\rm and}\qquad S(x)=L(x)H(x)+M(x)(E(x)\otimes E(x)).
\end{equation*}
This is the so-called scaling of small deformations and moderate electric fields; within this scaling, by the same token, the space charge density $Q$ must be of order $\zeta^{1/2}$.

We  now detail the governing equations for the problem under investigation in this work. Assuming  periodicity of the microstructure, the permittivity, elasticity, and electrostriction tensors ($\e(y),L(y),M(y),$ respectively) that characterize the local elastic dielectric response of the material are defined on a unit cell (or, more precisely, on a unit torus $\T$)  and they are periodically rescaled by a small parameter $\delta$ to reflect the size of the microstructure. The resulting tensors are respectively denoted by $\ed(x),\Ld(x),\Md(x).$

\begin{figure}[h!]
\includegraphics[scale=.5]{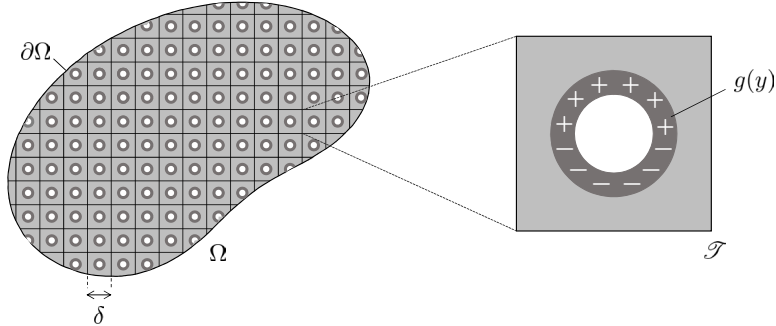}
\caption{\scriptsize {(a) The elastic dielectric composite. (b) The unit cell  with  the space-charge density.}}
\label{fig1}
\end{figure}
Moreover, the material is assumed to contain a distribution of periodically distributed space charges with density $g(y)$ such that
\be{eq.cn}\int_\T g(y)\ dy=0\ee
so as to preserve local charge neutrality, rescaled in a manner similar to that of the microstructure and modulated by a slowly varying macroscopic charge $f(x)$. These space charges can be passive or active. In the case of passive charges, the slowly varying macroscopic charge $f(x)$ is fixed from the outset. Physically, this corresponds to materials wherein space charges are ``glued'' to material points and remain so regardless of the applied mechanical loads and electric fields. This is the case, for instance, of porous polymer electrets for which the space charges are fixed at the walls of the pores. In the case of active charges,  the slowly varying macroscopic charge $f(x)$ is identified as the resulting macroscopic field for the electric potential and hence depends on the applied electric field. Physically, this corresponds to materials wherein space charge are locally mobile. This is the case, for instance, of polymer nano-particulate composites for which the space charges are locally mobile around the interfaces between the polymer and the nano-particles. Figure \ref{fig1} illustrates a schematic of the material and of its periodic microstructure and space charge content.

The relevant governing equations are
\be{eq.system}
\ba{l} \dvg  \ed \nabla \phd = \frac 1\delta g^\delta f\\[2mm]\dvg \left[\Ld \nabla\ud+\Md (\nabla \phd\otimes  \nabla \phd)\right]=0\ea\ee
for the electric potential $\phd$ and the displacement field $\ud$.   For simplicity , the boundary conditions are taken to be of Dirichlet type, that is,
\[\phd=\Phi\quad, \ud=0 \quad\mbox{on }\partial\Om.\]
Note that imposing Dirichlet boundary conditions on the electric potential amounts to  considering Gauss' law  inside the domain $\Om$, and not in $\mathds{R}^N$, a situation which corresponds to electrodes being placed along the entire boundary of $\Om$.

\begin{remark}\label{rem.order.delta} The heuristic justification of the presence of the term $1/\delta$ in front of the space charges $g^\delta f$ is as follows. Because of charge neutrality (see \eqref{eq.cn}), multiplication of the source term by $\delta^q$ with $q>-1$ would result in a homogenized dielectric equation  without any source term, that is an equation of the form
\[\dvg \e^h \nabla\ph=0 \ \mbox{ in }\Om,\]
where $\e^h$ is the homogenized permittivity tensor defined later in \eqref{eq.hom-diel}. Thus, the lowest $\delta$-order at which microscopically distributed charges will impact the homogenized dielectric equation is $\delta^{-1}$.  Of course, one can always add lower order source terms as emphasized  in Remark \ref{rem.source-term} below, but, their impact will disappear in the effective behavior unless charge neutrality is forsaken for those terms.
\hfill\P\end{remark}

The first objective of this work is to determine the purely dielectric macroscopic behavior of the material for an arbitrary but fixed ({\it i.e.}, passive) distribution of space charges in the limit when the period $\delta$ of the microstructure goes to $0$. This will be achieved in Section \ref{sec.ch}.

The second objective is to demonstrate  that  dielectric enhancement can always be achieved for the purely dielectric macroscopic behavior when adequate  active space charges are introduced.  To do that we need to identify $f(x)$ with the resulting macroscopic field for the electric potential.  We demonstrate that for a two-phase inclusion type microstructure, it is always possible to produce enhancement. Further, in the case of dilute inclusions, we propose an argument  inspired by the Clausius-Mossotti formula that yields an explicit value for that enhancement {and for  ``manufacturable" charges}.
This is the object of Section \ref{sec.ac} which we have placed at the end of this paper.

Finally, we determine the homogenized equations for the coupled elastic-dielectric behavior of the material. This is the object of Section \ref{sec.hed}. Doing so necessitates better convergence properties on the dielectric micro-macro analysis than those provided by Section \ref{sec.ch}.
To do so we combine large-scale regularity due to homogenization with local regularity properties
 that hold for two-phase microstructures with smooth inclusions. The technical details are the object of Section \ref{sec.ic}. In the last part of Section \ref{sec.ac}, we also investigate the elastic enhancement  for the dilute case already alluded to above.

Although the results and their proofs are written in the case of periodic media, they can all be extended to random media (with suitable mixing conditions), as we quickly argue in the appendix.

Notationwise, we denote by $\Msym$ the space of symmetric $N\times N$-matrices and by $\cdot$ the Euclidean inner product between vectors in $\R^N$ or the Fr\"obenius inner product between elements of $\Msym$, that is $e\cdot e'={\rm tr\;}ee'$ {with} $e,e'\in \Msym$. We will denote by $B_r(x)$ the open ball of center $x$ and radius $r$ and by $\rm{\sf i }$ the identity matrix.

We will sometimes identify the torus and its subsets with the unit cube  $Y=\Pi_{i=1,...,N}[0,1)\subset\R^N$  and the corresponding subsets (denoted with the corresponding roman character) through the canonical identification $\mathbf i$ between $\T$ and $\{z+Y:z\in\mathbb Z^N\}=\R^N$. Also we will adopt the following convention for a function $\zeta$ defined on $\T$. We will say that $\zeta\in H^1(\T)$ if, and only  $z=\zeta\circ\mathbf i$  is such that $z\in H^1_{\rm loc}(\R^N)$; note  that $z$ is $Y$-periodic. Further, if $\e \in L^\infty(\T;\Msym)$, we will write $\dvg{\e \nabla \zeta}$ for $\dvg{\{(\e\circ\mathbf i)\nabla z\}}$ and denote by $\zeta^\delta$ the periodic $H^1_{\rm loc}$-function $z(x/\delta)$ which we will also write as $\zeta(x/\delta)$. Similarly, we will denote
by $\int_\T (\nabla)\zeta(y)\ dy$ the integral $\int_Y (\nabla)z(y)\ dy$.

The rest of the notation is standard.

\section{Classical homogenization of the dielectrics}\label{sec.ch}
\label{sec:dielectrics}

In this section, we consider the dielectric part of our problem and propose to pass to the limit as the period goes to $0$. As already noted,  {structural assumptions such as }periodicity {(a random distribution with good enough mixing properties would do as well)}, while essential in the next section, {are not necessary assumptions} when handling the scalar dielectric equation; see Remark \ref{rem.Hconv} below.

So, on $\Om$, a bounded Lipschitz domain of $\R^N$, we consider the equation
\be{eq.diel}\ba{l} \dvg \ed \nabla \phd= \frac 1\delta g\xd f(x)\\[2mm]
\phd=\phi \mbox{ on }\partial\Om\ea\ee
with $f\in W^{1,\infty}(\Om)$, $g\in L^2(\T;\R^N)$ and $\int_\T g(y)\ dy=0$, $\phi\in H^{\frac 12}(\partial\Om)$ and $\ed(x):=\e\xd$ where $\e(y)\in L^\infty(\T;\Msym)$ with $\gamma |\xi|^2\le \e(y)\xi\cdot\xi\le\beta|\xi|^2$ for some $0<\gamma<\beta<\infty$.

We define $\psi$ to be the unique solution in $H^1(\T)$ of
\be{eq.psi}\ba{l} \triangle \psi(y)=  g(y)\\[2mm]
\int_\T \psi(y)\ dy=0\ea\ee
and note that, by elliptic regularity, $\psi \in H^2(\T)$. We set
\be{eq.tau}\tau(y):=\nabla \psi(y),\quad\td(x):= \nabla\psi\xd,\ee
so that \eqref{eq.diel} reads as
\be{eq.diel2}\ba{l} \dvg (\ed \nabla \phd- f\td)= -\td \cdot \nabla f\\[2mm]
\phd=\phi \mbox{ on }\partial\Om.\ea\ee
From \eqref{eq.diel2} and Poincar\'e's inequality, we immediately obtain that
\[\phd \mbox{ is bounded in } H^1(\Om) \mbox{ independently of } \delta\]
and, upon setting
\[\qd:= \ed\nabla\phd-f\td,\]
that
\[\qd \mbox{ is bounded in } L^2(\Om;\R^N) \mbox{ independently of } \delta.\]

Thus, up to a subsequence (not relabeled), we conclude that
\be{eq.wconv}\ba{ll} \phd \weak \ph &\mbox{ weakly in } H^1(\Om)\\[2mm]
\qd \weak q &\mbox{ weakly in } L^2(\Om;\R^N).\ea\ee
Of course,
\be{eq.div=0} \dvg q=  0\ee
since $\td \stackrel{L^2(\Om;\R^n)}{\weak} \fint_\T \nabla \psi(y)\ dy =0$.
It remains to identify $q$.

To that effect, consider the periodic corrector $w_j$ defined as follows. Set $\chi_j$ to be the unique solution in $H^1(\T)$ to
\be{eq.chij}\ba{l}\dvg \e \nabla (\chi_j+y_j)=0\\[2mm]\int_\T \chi_j\ dy=0.\ea\ee
Then
\[w_j:=\chi_j+y_j\]
Set
\be{def.wdj}\wdj(x):= \delta\chi_j\xd+x_j\ee
and note that $\nabla\wdj(x)=(\nabla w)\xd$. Then, on the one hand, the div-curl Lemma  \cite{murat} (or integration by parts) implies that, for any $\zeta\in C^\infty_c(\Om)$,
\[\int_\Om \zeta \qd\cdot \nabla\wdj\ dx \longrightarrow \int_\Om \zeta q\cdot\vec{e}_j\ dx=\int_\T \zeta q_j \ dy.\]

On the other hand define, according to classical elliptic homogenization \cite[Chapter 1]{bensoussan.lions.papanicolaou}, the symmetric constant matrix $\e^h$ as
\be{eq.hom-diel}\e^h\vec{e}_j:=\fint_\T \e(y)\nabla w_j \ dy.\ee
 Since $\e$ is symmetric, another application of the div-curl Lemma yields
\[\int_\Om \zeta \qd\cdot\nabla\wdj\ dx=\int_\Om \zeta \nabla\phd\cdot\ed \nabla\wdj\ dx - \int_\Om \zeta f\td\cdot\nabla\wdj\ dx\longrightarrow \int_\Om \zeta \nabla\ph\cdot\e^h\vec{e}_j\ dx- \int_\Om \zeta fa_j\ dx,\]
with
\be{eq.defaj}a_j:=\fint_\T \tau(y)\cdot\nabla w_j(y)\ dy=\fint_\T \nabla\psi(y)\cdot(\vec{e}_j+\nabla \chi_j(y))\ dy=-\fint_\T g(y)\chi_j(y)\ dy.
\ee
Hence,
\be{eq.q}
q=\e^h\nabla\ph-af
\ee
with $a\in\R^N$ given through \eqref{eq.defaj}.

A classical result of $H$-convergence  is that
\[\gamma |\xi|^2\le\e^h\xi\cdot\xi\le \beta|\xi|^2.\]
Thus, in view of \eqref{eq.div=0}, we conclude that $\ph$ is the unique $H^1(\Om)$-solution of
\be{eq.hom} \ba{l}\dvg \e^h\nabla\ph = a\cdot\nabla f\\[2mm]\ph=\phi \mbox{ on }\partial\Om,\ea\ee
so that the entire sequence $(\phd,\qd)$ converges to $(\ph, q)$ weakly in $H^1(\Om)\times L^2(\Om;\R^N)$.


\medskip

We now strive to improve the weak convergence results with the help of correctors.
To that effect we introduce $\theta\in H^1(\T)$ to be the unique solution to
\be{eq.eq-t}\begin{cases}\dvg (\e\nabla \theta -\tau)=0\; (\mbox{or still }\dvg \e\nabla \theta=g)\\[2mm]\int_\T\theta(y)\ dy=0.\end{cases}\ee
We set
\[\sigma:=\e\nabla \theta -\tau,\]
and
\be{def.thd}\thd(x):=\delta\theta\xd \quad \sd(x):=\sigma\xd, \ee
so that $\sd=\ed\nabla\thd-\td$
and note that,  by symmetry of $\e$ and \eqref{eq.chij}, since $\tau$ has zero average over $\T$ and in view of \eqref{eq.defaj},
\begin{multline}\label{eq.conv.sig}\sd_j \weak b_j:=\fint_\T\e(y)\nabla\theta(y)\cdot\nabla y_j\ dy=-\fint_\T\nabla\theta(y)\cdot \e(y)\nabla \chi_j(y)\ dy\\[2mm]=-\fint_\T(\e(y)\nabla\theta(y)-\tau(y))\cdot \nabla \chi_j(y)\ dy-\fint_\T\tau(y)\cdot \nabla \chi_j(y)\ dy\\[2mm]=-\fint_\T\tau(y)\cdot \nabla \chi_j(y)\ dy=-\fint_\T \tau(y)\cdot\nabla w_j(y)\ dy=-a_j, \;\mbox{ weakly in } L^2(\Om;\R^N).\end{multline}

We now follow a classical computation; see e.g. \cite[section 4]{francfort.murat}.
Take $\Phi$ in $C^\infty_c(\Om;\R^N)$, $\zeta, \eta\le 1$ in $C^\infty_c(\Om)$ and compute
\begin{multline*}\int_\Om \eta^2\ed \left(\nabla\phd-\sum_{j=1,..,N}\nabla\wdj \Phi_j-\nabla\thd\ \zeta\right)\cdot
 \left(\nabla\phd-\sum_{j=1,..,N}\nabla\wdj \Phi_j-\nabla\thd\ \zeta\right)\ dx=\\[2mm]
 \int_\Om  \eta^2\!\left\{\!\left(\ed\nabla\phd-\td f\right)-\!\!\!\!\sum_{j=1,..,N}\!\!\ed\nabla\wdj \Phi_j-\zeta\left(\ed\nabla\thd\!-\!\td\right)\!\right\}\!\cdot\! \left(\!\nabla\phd-\!\!\!\!\sum_{j=1,..,N}\!\!\!\!\nabla\wdj \Phi_j-\nabla\thd\ \zeta\!\right)
 dx\\[2mm]
 +\int_\Om \eta^2(f-\zeta)\td\cdot\left(\nabla\phd-\!\!\sum_{j=1,..,N}\nabla\wdj \Phi_j-\nabla\thd\ \zeta\right)\ dx\\[2mm]
 =\int_\Om \eta^2\left\{\left(\qd-\!\!\sum_{j=1,..,N}\!\!\ed\nabla\wdj \Phi_j-\zeta\sd\right)+(f-\zeta)\td\right\} \cdot\left(\nabla\phd-\!\!\sum_{j=1,..,N}\!\!\nabla\wdj \Phi_j-\nabla\thd\ \zeta\right)
 \ dx. \end{multline*}
 Multiple applications of the div-curl Lemma, together with \eqref{eq.q}, \eqref{eq.conv.sig}, imply that
 \begin{multline}\label{eq.conv1}
 \int_\Om\eta^2 \left(\qd-\sum_{j=1,..,N}\ed\nabla\wdj \Phi_j-\zeta\sd\right)\cdot\left(\nabla\phd-\sum_{j=1,..,N}\nabla\wdj \Phi_j-\nabla\thd\ \zeta\right)
 \ dx\stackrel{\delta}{\longrightarrow}\\[2mm]\int_\Om\eta^2(q-\e^h\Phi+\zeta a)\cdot(\nabla \ph-\Phi)\ dx=\int_\Om\eta^2(\e^h(\nabla\ph-\Phi)+a(\zeta-f) )\cdot(\nabla \ph-\Phi)\ dx.\end{multline}

 Further,
 \[\td \cdot\nabla\phd=(\td-\ed\nabla \thd)\cdot\nabla \phd+(\ed\nabla \phd-f\td)\cdot\nabla\thd+f\td\cdot\nabla\thd=
 -\sd\cdot\nabla \phd+\qd\cdot\nabla\thd+f\td\cdot\nabla\thd,\]
 so that, setting
 \be{eq.kappa}\kappa:=\fint_\T \tau(y)\cdot\nabla\theta(y)\ dy,\ee
the div-curl Lemma implies, in view of \eqref{eq.conv.sig}, that
 \be{conv.tp}\td \cdot\nabla\phd\rightharpoonup  (a\cdot \nabla\ph+\kappa f), \mbox { {weakly*} in } \Mb(\Om).\ee
 \begin{remark}\label{rem.kappa}Note for later use that, upon multiplication of the first equation in \eqref{eq.eq-t} by $\theta$
 and integration over $\T$, we get
$\kappa= \fint_\T \e(y)\nabla\theta(y)\cdot\nabla\theta(y)\ dy>0.$
 \hfill\P
 \end{remark}

 Hence, since $f$ is in  particular continuous,
  \begin{multline}\label{eq.conv2}
  \int_\Om\eta^2(f-\zeta)\td\cdot\left(\nabla\phd-\sum_{j=1,..,N}\nabla\wdj \Phi_j-\nabla\thd\ \zeta\right)
 \ dx\stackrel{\delta}{\longrightarrow}\\[2mm]  \int_\Om\eta^2(f-\zeta)(a\cdot \nabla\ph+\kappa f-a\cdot\nabla \Phi-\kappa\zeta)\ dx=\int_\Om\eta^2(f-\zeta)
 (a\cdot (\nabla\ph-\Phi)+\kappa (f-\zeta))\ dx.
  \end{multline}
 Summing the contributions \eqref{eq.conv1} and \eqref{eq.conv2}, we finally obtain
\begin{multline*}\lim_\delta\int_\Om \eta^2\ed \left(\nabla\phd-\sum_{j=1,..,N}\nabla\wdj \Phi_j-\nabla\thd\ \zeta\right)\cdot
 \left(\nabla\phd-\sum_{j=1,..,N}\nabla\wdj \Phi_j-\nabla\thd\ \zeta\right)\ dx\\[2mm]=
  \int_\Om\eta^2\left\{(\e^h(\nabla\ph-\Phi)+a(\zeta-f) )\cdot(\nabla \ph-\Phi)+(f-\zeta)
 (a\cdot (\nabla\ph-\Phi)+\kappa (f-\zeta))\right\}\ dx\\[2mm]=
  \int_\Om\eta^2(\e^h(\nabla\ph-\Phi)\cdot(\nabla \ph-\Phi)+\kappa(f-\zeta)
 (f-\zeta))\ dx.
 \end{multline*}
 From this and the coercivity of $\e(y)$ we conclude that, for some $C>0$,
 \begin{multline}\label{eq.limsup}\gamma\limsup_\delta \int_\Om\eta^2 \left(\nabla\phd-\sum_{j=1,..,N}\nabla\wdj \Phi_j-\nabla\thd\ \zeta\right)\cdot
 \left(\nabla\phd-\sum_{j=1,..,N}\nabla\wdj \Phi_j-\nabla\thd\ \zeta\right)\ dx\\[2mm]\le
 C\{\|\nabla\ph-\Phi\|_{L^2(\Om;\R^N)}^2+\|f-\zeta\|_{L^2(\Om{\color{red})}}^2\}.\end{multline}

\medskip

Now, assuming that
\be{hyp.reg}\ba{l}\partial \Om\in C^{2,\alpha},\;0<\alpha<1\\[1mm]\phi\in C^{2,\alpha}(\partial\Om)\\[1mm]f\in C^{1,\alpha}(\Om)\ea\ee
Schauder elliptic regularity applied to \eqref{eq.hom} yields that $\ph\in C^{2,\alpha}(\Omb)$. Then, for any $\lambda>0$ we can find $\phi,\zeta$ such that
\[\|\nabla\ph-\Phi\|_{C^0(\Omb)}+\|f-\zeta\|_{C^0(\Omb)}\le \lambda\]
so that, because $\nabla\wdj,\nabla \thd$ are bounded in $L^2(\Om)$ independently of $\delta$,
\eqref{eq.limsup} together with the arbitrariness of $\eta$ implies that
\[\nabla\phd-\sum_{j=1,..,N}\nabla\wdj \de{\ph}{x_j}-\nabla\thd f\longrightarrow 0,\quad \mbox{ stronly in }L^2_{\rm loc}(\Om{\color{red};}\R^N).\]

We have proved the following
\begin{theorem}\label{thm.clas} Under assumptions \eqref{hyp.reg}, $\phd$, unique
$H^1(\Om)$-solution to \eqref{eq.diel} is such that
\[\nabla\phd-\sum_{j=1,..,N}\nabla\wdj \de{\ph}{x_j}-\nabla\thd f\longrightarrow 0,\quad \mbox{ strongly in }L^2_{\rm loc}(\Om{\color{red};}\R^N)\]
with $\wdj$ defined in \eqref{def.wdj} and $\thd$ defined in \eqref{def.thd}.
\end{theorem}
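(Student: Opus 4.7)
The plan is to run an energy-method argument built on the two families of correctors $\wdj$ and $\thd$ already constructed: the former captures the $\delta$-scale oscillations of $\nabla\phd$ coming from the operator $\dvg\ed\nabla\cdot$, the latter absorbs the oscillations fed in by the singular source $\tfrac 1\delta g(\cdot/\delta)f$. Concretely, for arbitrary $\Phi \in C^\infty_c(\Om;\R^N)$ and $\zeta,\eta\in C^\infty_c(\Om)$ with $\eta\le 1$, set
\begin{equation*}
R^\delta := \nabla\phd - \sum_{j=1,..,N} \nabla\wdj \,\Phi_j - \nabla\thd \,\zeta, \qquad I^\delta := \int_\Om \eta^2\, \ed R^\delta \cdot R^\delta\ dx.
\end{equation*}
Since $\gamma|\xi|^2 \le \e(y)\xi\cdot\xi$, any upper bound on $\limsup_\delta I^\delta$ immediately gives one on $\limsup_\delta \|\eta R^\delta\|_{L^2}^2$.

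To pass to the limit in $I^\delta$ I would rewrite $\ed R^\delta = \bigl(\qd - \sum_j \ed\nabla\wdj \,\Phi_j - \zeta\sd\bigr) + (f-\zeta)\td$ and expand the product with $R^\delta$ term by term, applying the div-curl lemma to each bilinear pairing. The weak limits needed are $\qd \weak \e^h\nabla\ph - af$ from \eqref{eq.q}, $\sd_j \weak -a_j$ from \eqref{eq.conv.sig}, together with the auxiliary convergence $\td\cdot\nabla\phd \weakst a\cdot\nabla\ph + \kappa f$ in $\Mb(\Om)$ established in \eqref{conv.tp}. The last of these handles the pairing $(f-\zeta)\td\cdot\nabla\phd$ in which both factors oscillate at the same scale; it is itself obtained by writing $\td\cdot\nabla\phd = -\sd\cdot\nabla\phd + \qd\cdot\nabla\thd + f\td\cdot\nabla\thd$ and noting that each summand is of div-curl form. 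Collecting the contributions, the limit collapses to
\begin{equation*}
\lim_\delta I^\delta = \int_\Om \eta^2\!\left(\e^h(\nabla\ph-\Phi)\cdot(\nabla\ph-\Phi) + \kappa (f-\zeta)^2\right) dx,
\end{equation*}
which is controlled by $C\bigl(\|\nabla\ph-\Phi\|_{L^2(\Om;\R^N)}^2 + \|f-\zeta\|_{L^2(\Om)}^2\bigr)$.

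The final step is approximation. Under hypothesis \eqref{hyp.reg}, Schauder elliptic regularity applied to the homogenized equation \eqref{eq.hom} gives $\ph\in C^{2,\alpha}(\Omb)$, so both $\nabla\ph$ and $f$ are uniformly continuous on $\Omb$ and may be approximated to arbitrary $C^0(\Omb)$-accuracy by $\Phi\in C^\infty_c(\Om;\R^N)$ and $\zeta\in C^\infty_c(\Om)$. Since $\|\nabla\wdj\|_{L^2(\Om)}$ and $\|\nabla\thd\|_{L^2(\Om)}$ are bounded independently of $\delta$, replacing $\Phi$ by $\nabla\ph$ and $\zeta$ by $f$ in $R^\delta$ costs only a controlled error in $L^2_{\rm loc}$; sending the approximation error to zero, then using the arbitrariness of $\eta$, delivers the strong $L^2_{\rm loc}(\Om;\R^N)$ convergence of $R^\delta$ to zero, which is exactly the conclusion.

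The main obstacle is the mixed product $\td\cdot\nabla\phd$: both factors oscillate at scale $\delta$, neither has a directly usable curl-free or divergence-free structure, and a naive div-curl application therefore fails. The algebraic identity $\td = \ed\nabla\thd - \sd$ is the key device that splits this dangerous product into three summands each amenable to the div-curl lemma, and this is what ultimately lets the energy method close; the remainder of the proof is a careful, but routine, bookkeeping of weak limits.
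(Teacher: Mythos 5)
Your proposal reproduces the paper's own argument essentially verbatim: the same quadratic form $\int_\Om\eta^2\ed R^\delta\cdot R^\delta$, the same splitting $\ed R^\delta=(\qd-\sum_j\ed\nabla\wdj\Phi_j-\zeta\sd)+(f-\zeta)\td$, the same treatment of the mixed product $\td\cdot\nabla\phd$ via the identity $\td=\ed\nabla\thd-\sd$ leading to \eqref{conv.tp}, and the same final approximation of $\nabla\ph$ and $f$ in $C^0(\Omb)$ using Schauder regularity under \eqref{hyp.reg}. The proof is correct and takes the same route as the paper.
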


{\begin{remark}\label{rem.Hconv}Provided that we view the dielectric problem as in \eqref{eq.diel2} -- that is without reference to charges $g^\delta$, but with an oscillating field $\tau^\delta$ -- the obtained results do not rest on periodicity and could be adapted to a general homogenization framework like that of $H$-convergence \cite{murat.tartar}.\hfill\P\end{remark}}

\begin{remark}\label{rem.global} If we also assume that
\be{hyp.g}
g\in C^{0,\alpha}(\T)
\ee
 for some $0<\alpha<1$, then elliptic regularity applied to \eqref{eq.psi} implies that $\tau \in C^{1,\alpha}(\T;\R^N)$, and thus that, in particular, the convergence in \eqref{conv.tp} takes place weakly in $L^2(\Om).$ Because of that, we do not need the compactly supported smooth test $\eta$ in all prior computations and the result of Theorem \ref{thm.clas} becomes
\[\nabla\phd-\sum_{j=1,..,N}\nabla\wdj \de{\ph}{x_j}-\nabla\thd f\longrightarrow 0,\quad \mbox{ strongly in }L^2(\Om{\color{red};}\R^N).\]
Note that the fact that $\phd$ satisfies Dirichlet boundary conditions is essential in evaluating the limit of the term $\int_\Om \ed\nabla\phd\cdot\nabla\phd\ dx$ in all previous computations.
\hfill\P\end{remark}

\begin{remark}\label{rem.mat-at-bdary} In the case of a two-phase particulate microstructure, that is whenever
\[\e(y):=\chi_\M\e_\M+(1-\chi_\M)\e_\I, \mbox{ with } \I:=\T\setminus\M\]
where $\M$ is a measurable subset of $\T$, we can modify the definition of $\ed$ so that no inclusion intersects $\partial \Omega$.  We define
\[\Omega_\delta := \bigcup_{z \in \mathbb Z^N \mbox{\tiny\ s.t. } \delta (z+2Y)\subset \Omega} \delta(z+2Y),\]
note that
$|\Omega \setminus \Omega_\delta|\le C \delta$ and set
\[\ed(x):=\e(\frac x\delta)\chi_{\Omega_\delta}+\e_\M\chi_{\Omega\setminus \Omega_\delta}.\]
With this definition of $\ed$, the results of Section~\ref{sec.ch} still hold true with trivial modifications of the proofs. \hfill\P
\end{remark}

Unfortunately, as described in the introduction, this result is not sufficient, when plugged into the equations of elasticity, to ensure that $\nabla \phd$ can be replaced by $\sum_{j=1,..,N}\nabla\wdj \de{\ph}{x_j}+\nabla\thd f$ in those, or that one can perform any kind of homogenization process on the resulting system.
This is why the next Section is devoted to an improvement of Theorem \ref{thm.clas}. The framework required for the successful completion of such a task will be much more constrained than that in the current Section. In particular, periodicity, which was merely convenience so far, will become essential. Equally essential will be the assumption that the microstructure is {particulate}.

\section{Improved estimates and correctors result for the dielectrics}\label{sec.ic}
Throughout this Section, we assume that $\partial \Om\in C^{2,\alpha}$ for some $0<\alpha<1$.
The unit torus $\T$ is of the form
\be{hyp.phase}\ba{l}\T= \M\cup \I,\; \M \mbox{ closed such that } \partial \M \mbox{ is }C^{1,\beta},  \;0<\beta<1\\[2mm] \I=\T\setminus \M\\[2mm]
\overline{\mathbf i(\M)}  \mbox{ is a connected subset of $\R^N$ (a matrix phase).}\ea\ee
Further,
\be{hyp.coef}\e(y):=\chi_\M\e_\M+(1-\chi_\M)\e_\I, \mbox{ with } \gamma |\xi|^2\le \e_{\M,\I}\xi\cdot\xi\le\gamma'|\xi|^2.\ee
We  define $\ed$ as in Remark \ref{rem.mat-at-bdary}.

  For some $0<\alpha < \frac{\beta}{(\beta+1)N}$ we also assume that $f\in C^{1,\alpha} (\Om)$,  that $\psi$, the solution to \eqref{eq.psi}, satisfies
  \be{hyp.psi}\psi \in C^{1,\alpha}(\T;\R^N),\ee
and that $\phi$ is the restriction to $\partial\Om$ of a function of $C^{1,\alpha}(\Omb)$ (still denoted by $\phi$).
Remark that the assumed regularity of $\psi$ will be achieved if , e.g., $g\in C^{0,\alpha}(\T)$.

In a first step, we prove $\delta$-independent $L^q$-estimates on $\nabla \phd$ for any
$1\le q<\infty$. This is the object of the following
\begin{proposition}\label{prop.Lqreg}
For all $1\le q <\infty$, the sequence $\nabla\phd$ is bounded in $L^q(\Om;\R^N)$, independently of $\delta$.
%
\end{proposition}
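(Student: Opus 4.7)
The plan is to absorb the singular source of order $1/\delta$ into an explicit oscillating corrector, reducing the problem to one with uniformly bounded data, and then to invoke the uniform-in-$\delta$ $W^{1,q}$ regularity that holds for periodic operators whose coefficients are piecewise smooth with $C^{1,\beta}$ interfaces.

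First I would introduce $v^\delta := \phd - f\thd$, with $\thd = \delta\theta(x/\delta)$ defined as in \eqref{def.thd}. Using the chain rule and the defining equation \eqref{eq.eq-t}, one has $\dvg(\ed\nabla\thd) = \frac{1}{\delta}g^\delta$, and a direct computation yields
\begin{equation*}
\dvg(\ed \nabla v^\delta) = -\dvg(\thd\, \ed \nabla f) - \nabla f \cdot \ed \nabla \thd \quad \text{in } \Om,
\qquad v^\delta = \phi - f\thd \quad \text{on } \partial\Om.
\end{equation*}
Under \eqref{hyp.phase}--\eqref{hyp.coef} and \eqref{hyp.psi}, the Li--Nirenberg transmission regularity for \eqref{eq.eq-t} gives $\theta$ piecewise $C^{1,\alpha'}$ on $\overline{\M}$ and $\overline{\I}$ for some $\alpha'>0$; in particular $\nabla\theta \in L^\infty(\T)$. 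Combined with $\|\thd\|_{L^\infty(\Om)} = O(\delta)$ and the regularity of $f$, the right-hand side of the equation for $v^\delta$ is bounded in $L^\infty(\Om)$ uniformly in $\delta$, and the Dirichlet trace of $v^\delta$ is bounded in $C^{1,\alpha}(\partial\Om)$, again uniformly in $\delta$.

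Second, I would invoke a uniform-in-$\delta$ $W^{1,q}$ estimate for the periodic two-phase Dirichlet problem satisfied by $v^\delta$, namely
\begin{equation*}
\|\nabla v^\delta\|_{L^q(\Om)} \le C(q)
\qquad \text{for every } 1\le q <\infty.
\end{equation*}
This is obtained by combining two ingredients. Above the scale $\delta$, the medium is well approximated by its constant-coefficient homogenization $\e^h$, and the Avellaneda--Lin large-scale Lipschitz/$C^{1,\alpha}$ regularity applies to $v^\delta$. Below the scale $\delta$, inside each rescaled period cell, the coefficient $\e$ is piecewise constant with $C^{1,\beta}$ interfaces, so the Li--Nirenberg (Li--Vogelius) interface regularity for transmission problems provides piecewise $C^{1,\alpha'}$ estimates. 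A rescaling/interpolation argument splicing these two regimes, together with the fact that the modified $\ed$ of Remark \ref{rem.mat-at-bdary} is identically equal to $\e_\M$ in a $\delta$-strip along $\partial\Om \in C^{2,\alpha}$ (so that no inclusion meets the boundary and the global boundary regularity is that of a smooth constant-coefficient problem), delivers the estimate. The constraint $0<\alpha<\beta/((\beta+1)N)$ enters precisely in quantifying the interplay between the Green's-function decay of the homogenized operator and the Hölder exponent available from the interface regularity.

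Finally, returning to $\phd = v^\delta + f\thd$ and observing that $\nabla(f\thd) = \thd\,\nabla f + f\,(\nabla\theta)(x/\delta)$ is bounded in $L^\infty(\Om)$ uniformly in $\delta$ by the previous step, the bound on $\nabla v^\delta$ transfers directly to $\|\nabla \phd\|_{L^q(\Om)} \le C(q)$ for all $1 \le q < \infty$.

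The main obstacle is Step 2: classical Calderón--Zygmund theory would require VMO coefficients, which fails here since $\ed$ jumps across the rescaled inclusion boundaries. Overcoming this requires precisely the two-scale regularity theory announced in the introduction, which uses in an essential way both the periodic structure of the microstructure and the $C^{1,\beta}$ smoothness of $\partial\M$ assumed in \eqref{hyp.phase}.
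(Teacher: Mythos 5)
Your proposal is correct in substance, and its core mechanism --- large-scale regularity inherited from homogenization above scale $\delta$, Li--Vogelius interface regularity below scale $\delta$, and a splicing of the two --- is exactly the architecture of the paper's proof: Step 1 applies the large-scale Calder\'on--Zygmund estimate of \cite[Theorem~7.7]{AKM-book} to control $\big(\fint_{B_\delta(x)}|\nabla\phd|^2\big)^{q/2}$ in $L^1(\Om)$, Step 2 blows up to the unit cell and uses \cite[Theorem 1.1]{LV-00} together with De Giorgi--Nash--Moser and Poincar\'e--Wirtinger to get $\sup_{B_{\delta/d}(x)}|\nabla\phd|\le C\big((\fint_{B_\delta(x)}|\nabla\phd|^2)^{1/2}+C'\big)$, and Step 3 integrates the $q$-th power of the latter against the former. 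Where you genuinely differ is the preliminary reduction. The paper does not subtract a corrector: it keeps $\phd$ and exploits that the singular source is already in divergence form, $\tfrac1\delta g^\delta f=\dvg(f\td)+\td\cdot\nabla f$ with $\td=\nabla\psi(\cdot/\delta)$ bounded by \eqref{hyp.psi}, introducing an auxiliary $\zd$ with $\triangle\zd=\nabla f\cdot\td$ to absorb the non-divergence part. You instead subtract $f\thd$, which also works but requires $\nabla\theta\in L^\infty(\T)$ (another application of \cite{LV-00}, which the paper only invokes later, in the proof of Theorem \ref{thm.impr}) already at this stage; the paper's route needs only the $C^{1,\alpha}$ regularity of $\psi$. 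Your reduction has the aesthetic advantage of making the data manifestly $O(1)$, at the cost of an extra regularity input.

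Three caveats. First, your right-hand side still contains the non-divergence term $\nabla f\cdot\ed\nabla\thd$; the large-scale Calder\'on--Zygmund estimate is stated for divergence-form data, so you still need the paper's device of an auxiliary potential (or a Newtonian potential) to rewrite it --- routine, but not free. Second, the trace of $v^\delta$ on $\partial\Om$ is bounded in $W^{1,\infty}$ uniformly in $\delta$ but \emph{not} in $C^{1,\alpha}$, since $\nabla(f\thd)$ contains $f\,\nabla\theta(\cdot/\delta)$ whose H\"older seminorm blows up like $\delta^{-\alpha}$; uniform Lipschitz control of a lifting is all you need for $W^{1,q}$, so state that instead. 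Third, the classical Avellaneda--Lin theory requires H\"older-continuous coefficients, which fail here across the inclusion boundaries; what is actually available (and what the paper uses) is the \emph{large-scale} version for bounded measurable periodic coefficients, and the splicing step you describe in one sentence is precisely the content of the paper's Steps 2--3, so it should be carried out rather than merely announced. (Your attribution of the constraint $\alpha<\beta/((\beta+1)N)$ to Green's-function decay is also off: it is the admissibility condition for the H\"older exponent in \cite{LV-00}.)
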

\begin{proof}

\noindent{\sf Step 1.} First, we apply the large-scale Calder\'on-Zygmund estimates of \cite[Theorem~7.7]{AKM-book}   to \eqref{eq.diel2} rewritten as
\begin{equation}\label{eps2}
\left\{
\begin{array}{rcl}
\dvg \ed \nabla (\phd-\phi)&=&\dvg (f \td-\ed \nabla \phi+\nabla \zd) \\[2mm]
\phd-\phi&=&0 \quad  \mbox{ on }\partial \Om,
\end{array}
\right.
\end{equation}
where $\zd$ is the unique $H^1_0(\Om)$-solution to
\[\triangle \zd=\nabla f\cdot\td.\]
We get, for all $\delta>0$ and $q\ge 2$,
\begin{multline*}
\int_{\Om} \Big(\fint_{B_\delta(x)} \chi_\Om(z) |\nabla (\ph^\delta-\phi)|^2(z)\ dz\Big)^\frac q2\ dx \\[2mm] \le \, C_q  \Big( \| \nabla (\ph^\delta-\phi) \|_{L^2(\Om)}^q+ \|f\td-\ed\nabla \phi+\nabla \zd\|_{L^q(\Omega)}^q\Big).
\end{multline*}

Note that, since the coefficients are periodic, the random variable involved in  \cite[Theorem~7.7]{AKM-book}
is simply a constant (and in fact $\delta$) in the present setting.

Now, since $\psi$ is in $C^{1,\alpha}(\T;\R^N)$,
{$\td$ given by \eqref{eq.tau} is bounded in
$L^q(\Om;\R^N)$ independently of $\delta$ and
maximal $L^q$-regularity for the Laplacian
implies that $\nabla \zd$ is bounded in $W^{1,q}(\Om;\R^N)$
independently of $\delta$.
}

Since, by Jensen's inequality,
\[
\int_{\Omega} \Big(\fint_{B_\delta(x)} \chi_\Omega |\nabla \phi|^2\ dz\Big)^\frac q2\,dx\le\,
 \int_{\R^N} \fint_{B_\delta(x)} \chi_\Omega |\nabla \phi|^q
\,dzdx= \, \int_{\R^N} \chi_\Omega |\nabla \phi|^qdx = \int_{\Omega}   |\nabla \phi|^q\ dx,
\]
 the first convergence in \eqref{eq.wconv} and the assumed regularity of the functions $f$ and $\psi$ finally yield
\be{eq.estq2}
\int_\Om\left(\fint_{B_\delta(x)} \chi_\Omega(z) |\nabla \ph^\delta|^2\ dz\right)^\frac q2\,dx\le C_q
\ee
for some constant $C_q$ depending on $q$ and on $\|f\|_{C^{1,\alpha}(\Om)},\|\psi\|_{C^{1,\alpha}(\T)},\|\phi\|_{C^{1,\alpha}(\Om)}.$ Estimate \eqref{eq.estq2} enables us to control $\nabla \phd$ on scales larger than $\delta$. In a second step, we will derive an estimate for small scales, that is for scales smaller than $\delta$.

\medskip
\noindent{\sf Step 2.} In this step, we crucially use the two-phase character of the
microstructure, as well as the $C^{1,\beta}$-regularity of the boundary of each of those phases. Take a point $x\in \Om$ and consider { the cube $Q_{2\delta}(x)$ of side-length $2\delta$ centered at $x$}. We blow up equation \eqref{eq.diel2} so as to obtain an equation on {$Q_2(0)$}.

To that effect, we set
\[\Phi^\delta(z):=\frac1\delta\phd(x+\delta z).\]
Then $\Phi^\delta$ satisfies
\be{eq.diel-res}\dvg_z \left(\e(\frac x\delta+z) \nabla_z\Phi^\delta-\!f(x+\delta z)\td(x+\delta z)\right)\!= -\delta\td(x+\delta z)\!\cdot\!\nabla f(x+\delta z) \mbox{ in } (\Om-\{x\})/\delta\cap {Q_2(0)}.\ee
{Since $\partial \Omega$ is $C^{2,\alpha}$ and $\partial \M$ is $C^{1,\beta}$, an elementary geometric argument shows there is {\color{red} a} domain $\Omega'$ containing $Q_1(0)$ such that $\partial \Big( (\Om-\{x\})/\delta\cap \Omega'\Big)$ is $C^{1,\beta}$
uniformly in $x$ and $\delta$.
Now, the set $(\Om-\{x\})/\delta\cap \Omega'$  contains a number of inclusions bounded uniformly wrt $\delta$ : if there are $p$ inclusions in $\T$, that is if $\I$ has $p$ connected components, then that set contains at most $2^Np$ inclusions. Since by assumption the boundary of those inclusions is also $C^{1,\beta}$, then, in the terminology of \cite{LV-00}, the $C^{1,\beta}$ modulus $K$ of $(\Om-\{x\})/\delta\cap \Omega'$ does not depend on $\delta$ and \cite[Theorem 1.1]{LV-00} applies.
Considering, for $\eta>0$, the set $\Om'_{\delta,\eta}=\{z\in \Om-\{x\})/\delta\cap Q_1(0):
{\rm dist}(z,\partial(\Om-\{x\})/\delta\cap Q_1(0))>\eta\}$}
we conclude in particular that, for any $c\in \R$,
\be{eq.int-ss}\|\nabla_z \Phi^\delta \|_{L^\infty(\Om'_{\delta,\eta})}\le
C_{K,\eta}\left(\|\Phi^\delta-c\|_{L^\infty(\Om'_{\delta,\eta/2})}+ C'\right),\ee
where $C_{K,\eta}$ is a constant that only depends on $K, \alpha,\beta, \gamma,\gamma',\eta$ while $C'$ is a constant that only depends on $\|f\|_{C^{1,\alpha}(\Om)},\|\psi\|_{C^{1,\alpha}(\T)},\|\phi\|_{C^{1,\alpha}(\Om)}.$

We now apply De Giorgi-Nash-Moser's theorem \cite[Theorem 8.24]{gilbarg.trudinger} to \eqref{eq.diel-res}. This yields in turn the following interior estimate:
\be{eq.DGNM} \|\Phi^\delta-c\|_{L^\infty(\Om'_{\delta,\eta/2})}\le C_\eta(\|\Phi^\delta-c\|_{L^2((\Om-\{x\})/\delta\cap Q_1(0))}+ C''),\ee
where $C_\eta$ depends only on $\eta, \gamma,\gamma'$  and $C''$ depends only on $\|f\|_{C^{1,\alpha}(\Om)}$, $\|\psi\|_{C^{1,\alpha}(\T)},\|\phi\|_{C^{1,\alpha}(\Om)}.$
Inserting \eqref{eq.DGNM} into \eqref{eq.int-ss} yields
\[\|\nabla_z \Phi^\delta \|_{L^\infty(\Om'_{\delta,\eta})}\le
C'_{K,\eta}\left(\|\Phi^\delta-c\|_{L^2((\Om-\{x\})/\delta\cap {Q_1(0)})}+ C'''\right),\]
where $C'_{K,\eta}$ is a constant that only depends on $K, \alpha,\beta, \gamma,\gamma',\eta$ while $C'''$ is a constant that only depends on $\|f\|_{C^{1,\alpha}(\Om)},\|\psi\|_{C^{1,\alpha}(\T)},\|\phi\|_{C^{1,\alpha}(\Om)}.$

Now choose $c:=\fint_{L^2((\Om-\{x\})/\delta\cap {Q_1(0)})}\Phi^\delta(z)\ dz$ and apply Poincar\'e-Wirtinger's inequality to the previous estimate. We obtain
\be{eq.est-ss}\|\nabla_z \Phi^\delta \|_{L^\infty(\Om'_{\delta,\eta})}\le
C'_{K,\eta}\left(\|\nabla\Phi^\delta\|_{L^2((\Om-\{x\})/\delta\cap {Q_1}(0))}+ C'''\right).\ee

Blowing \eqref{eq.est-ss} down, we conclude in particular that, {for a $d>0$ large enough, we have, for
all $\delta>0$, }
\be{eq.est-local}
 \sup_{B_{\delta/{d}}(x)\cap \Om}|\nabla \phd|\le C'_{K,\eta}\left(\left(\fint_{\Om\cap B_\delta(x)}|\nabla\phd(y)|^2\ dy\right)^\frac12+ C'''\right).\ee

\medskip

\noindent{\sf Step 3.} We combine the estimates obtained in the first two steps as follows.

Remark that, for all $x\in \Om$ and all small enough $\delta$'s, the assumed regularity of $\partial\Om$ implies the existence of $d>0$ such that $|B_\delta(x)\cap\Om|\ge d|B_\delta(x)|$.  From  \eqref{eq.estq2} we then get
\[\int_\Om\left(\fint_{\Om\cap B_{\delta}(x)}|\nabla\phd(y)|^2\ dy\right)^\frac q2 dx\le d^{-\frac q 2}\int_\Om\left(\fint_{B_{\delta}(x)}\chi_\Om(y)|\nabla\phd(y)|^2\ dy\right)^\frac q2 dx\le d^{-\frac q 2}C_q.\]

In turn, from \eqref{eq.est-local}, we obtain
\begin{multline*}\int_\Om|\nabla\phd|^q\ dx\le \int_\Om\left(\sup_{B_{\delta/{d}}(x)\cap \Om}|\nabla\phd|^q\right) dx\\[2mm]
\le 2^q(C'_{K,\eta})^q \left(\int_\Om\left(\fint_{\Om\cap B_\delta(x)}|\nabla\phd(y)|^2\ dy\right)^\frac q2 dx+ |\Om|(C''')^q\right).\end{multline*}

Combining the  inequalities above, we finally conclude that
\[\int_\Om|\nabla\phd|^q\ dx\le 2^q(C'_{K,\eta})^q \left(d^{-\frac q 2}C_q+ |\Om| (C''')^q\right).\]
This completes the proof of the proposition.
\end{proof}

We are now in a position to improve on the convergence result of Theorem \ref{thm.clas}. We obtain the following {\color{red} .}

\begin{theorem}\label{thm.impr}
Under assumptions \eqref{hyp.reg}, \eqref{hyp.g}, \eqref{hyp.phase}, \eqref{hyp.coef}, $\phd$, unique
$H^1(\Om)$-solution to \eqref{eq.diel} is such that, for any $1\le q<\infty$,
\[\nabla\phd-\sum_{j=1,..,N}\nabla\wdj \de{\ph}{x_j}-\nabla\thd f\longrightarrow 0,\quad \mbox{ stronly in }L^q(\Om,\R^N)\]
with $\wdj$ defined in \eqref{def.wdj} and $\thd$ defined in \eqref{def.thd}.
\end{theorem}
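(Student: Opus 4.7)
Set $R^\delta := \nabla\phd-\sum_{j=1,\ldots,N}\nabla\wdj\,\de{\ph}{x_j}-\nabla\thd\, f$. Assumption \eqref{hyp.g}, via Schauder regularity applied to \eqref{eq.psi}, already entails \eqref{hyp.psi}; in particular Remark \ref{rem.global} applies and yields $R^\delta\to 0$ strongly in $L^2(\Om;\R^N)$. The plan is to upgrade this to strong $L^q$ convergence for every finite $q$ via interpolation, using $\delta$-uniform $L^r$ bounds on $R^\delta$ that the two-phase regularity makes available for arbitrarily large~$r<\infty$.

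The term $\nabla\phd$ is already controlled in $L^r$ by Proposition \ref{prop.Lqreg}. Schauder regularity for \eqref{eq.hom} under \eqref{hyp.reg} provides $\ph\in C^{2,\alpha}(\Omb)$, hence $\de{\ph}{x_j}\in L^\infty(\Om)$, and $f\in L^\infty(\Om)$ by assumption. It remains to control $\nabla\wdj$ and $\nabla\thd$ uniformly in $\delta$. Since these are rescalings of periodic functions on $\T$,
\[\|\nabla\wdj\|_{L^\infty(\R^N)}=\|\nabla w_j\|_{L^\infty(\T)},\qquad \|\nabla\thd\|_{L^\infty(\R^N)}=\|\nabla\theta\|_{L^\infty(\T)},\]
so matters reduce to showing $\nabla w_j,\nabla\theta\in L^\infty(\T;\R^N)$. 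These cell-level bounds follow by applying, to the cell equations \eqref{eq.chij} and \eqref{eq.eq-t} respectively, essentially the same combination of the Li--Vogelius estimate \cite[Theorem 1.1]{LV-00} and De Giorgi--Nash--Moser's theorem used in Step 2 of the proof of Proposition \ref{prop.Lqreg} -- now on the torus, so no boundary contribution enters. The $C^{1,\beta}$-regularity of $\partial\M$ under \eqref{hyp.phase} and the piecewise-constant character of $\e$ under \eqref{hyp.coef} are exactly what is required, while the right-hand side in \eqref{eq.eq-t} is controlled through $\tau\in C^{0,\alpha}(\T;\R^N)$, itself a consequence of \eqref{hyp.psi}.

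Assembling these ingredients produces, for every $1\le r<\infty$,
\[\|R^\delta\|_{L^r(\Om;\R^N)}\le C_r\quad\text{uniformly in }\delta.\]
Standard interpolation,
\[\|R^\delta\|_{L^q(\Om;\R^N)}\le \|R^\delta\|_{L^2(\Om;\R^N)}^{\theta}\,\|R^\delta\|_{L^r(\Om;\R^N)}^{1-\theta},\qquad \tfrac1q=\tfrac{\theta}{2}+\tfrac{1-\theta}{r},\; r>q\ge 2,\]
combined with $R^\delta\to 0$ in $L^2$ and the uniform $L^r$-bound just obtained, delivers $R^\delta\to 0$ in $L^q$ for every finite $q\ge 2$; the case $1\le q<2$ then follows by H\"older's inequality on the bounded domain $\Om$. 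The principal technical point -- the \emph{main obstacle} -- is the uniform $L^\infty$ bound on the periodic correctors through the two-phase interface; it is precisely there that the $C^{1,\beta}$-regularity of $\partial\M$ and the piecewise-constant structure of $\e$ come into play via \cite[Theorem 1.1]{LV-00}, exactly as in Step 2 of Proposition \ref{prop.Lqreg}.
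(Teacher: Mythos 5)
Your proposal is correct and follows essentially the same route as the paper: $L^\infty$ bounds on $\nabla w_j$ and $\nabla\theta$ via \cite[Theorem 1.1]{LV-00} applied on the boundaryless torus, the uniform $L^r(\Om)$ bounds on $\nabla\phd$ from Proposition \ref{prop.Lqreg}, the strong $L^2$ convergence from Remark \ref{rem.global}, and interpolation between the two. The only (immaterial) differences are cosmetic: the paper interpolates specifically between $L^2$ and $L^{2q}$ with exponent $\theta=1/(q-1)$, and obtains $\tau\in C^{1,\alpha}(\T)$ directly from Schauder applied to \eqref{eq.psi} rather than the slightly weaker $C^{0,\alpha}$ you invoke.
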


\begin{proof}
First note that the regularity assumptions on the domain, $f$ and $\phi$ and classical Schauder regularity imply that $\ph$, the solution to \eqref{eq.hom}, is in $C^{2,\alpha}(\Om)$. Further, the regularity assumption on $g$ and  classical Schauder regularity imply that $\tau\in C^{1,\alpha}(\T)$. Then another application of \cite[Theorem 1.1]{LV-00}, this time on $\T$ which has no boundary, implies in particular  that
\be{eq.linfireg}\nabla\theta,  \nabla \chi_j\in L^\infty(\T),\ee
 hence $\nabla w_j$
as well.  We can thus assume that, for any $r\ge 1$, the term
\[\sum_{j=1,..,N}\nabla\wdj \de{\ph}{x_j}+\nabla\thd f \mbox{ is bounded in }L^r(\Om) \mbox{ independently of }\delta.\]

Set $\theta=1/(q-1)$. In view of Remark \ref{rem.global} and Proposition \ref{prop.Lqreg}, the uniform bound derived above yields that, for some constant $C$ depending on $q$ and all the data,
\begin{multline*}\|(\nabla\phd-\sum_{j=1,..,N}\nabla\wdj \de{\ph}{x_j}-\nabla\thd f)\|_{L^q(\Om)}\le \|(\nabla\phd-\sum_{j=1,..,N}\nabla\wdj \de{\ph}{x_j}-\nabla\thd f)\|^\theta_{L^2(\Om)}\times\\[2mm]  \|(\nabla\phd-\sum_{j=1,..,N}\nabla\wdj \de{\ph}{x_j}-\nabla\thd f)\|^{1-\theta}_{L^{2q}(\Om)}\le C \|(\nabla\phd-\sum_{j=1,..,N}\nabla\wdj \de{\ph}{x_j}-\nabla\thd f)\|^\theta_{L^2(\Om)}\stackrel{\delta}{\longrightarrow}0.
\end{multline*}
Hence the result.
\end{proof}

\begin{remark}\label{rem.mult-ph} We have assumed throughout this Section that the composite is made of two phases. Nothing would change if we considered $n$ phases instead of 2, provided that we keep the same regularity assumptions. Theorem \ref{thm.impr} would still hold true, and Theorem
\ref{thm.hom-elast} below as well.
\hfill\P\end{remark}

\begin{remark}\label{rem.source-term} All results of Sections  \ref{sec.ch}, \ref{sec.ic} remain valid if a source term of the form $h^\delta v$ is added to the right hand-side of \eqref{eq.system} with $h\in L^\infty(\T)$ and
$v\in L^\infty(\Om)$. Then, one has to add the term $\left(\int_\T h(y)\ dy\right)  v$  to the right hand-side of the homogenized equation \eqref{eq.hom}. All other results remain unchanged.\hfill\P\end{remark}

\section{Homogenization of the elasto-dielectrics}\label{sec.hed}

We now address the elasticity part of the problem. Recall that $\Ld(x):=L\xd$ where $L(y)$ is a measurable, symmetric linear mapping from $\Msym$ into itself with the properties that $\gamma |e|^2\le L(y)e\cdot e\le \gamma' |e|^2$ for a.e. $y\in\T$ and some
$0<\gamma<\gamma'<\infty$. Also $\Md(x):=M\xd$ with $M(y)$ a bounded, measurable,  linear mapping from $\Msym$ into itself.

 The equations are

\be{eq.elast}\ba{l} \dvg (\Ld \nabla \ud +\Md (\nabla\phd\otimes\nabla\phd))=0\\[2mm]
\ud=0 \mbox{ on }\partial\Om.\ea\ee

We assume that assumptions \eqref{hyp.reg}, \eqref{hyp.g}, \eqref{hyp.phase}, \eqref{hyp.coef} (hence also \eqref{hyp.psi}) hold true throughout this Section.

In particular, we can apply Proposition \ref{prop.Lqreg} and we conclude, with the help of Korn and Poincar\'e inequalities, that
\[\ud \mbox{ exists and is bounded in } H^1_0(\Om;\R^N) \mbox{ independently of }\delta.\]

We can also apply Theorem \ref{thm.impr} and we immediately obtain that, for any $0<r<\infty$,
\be{eq.conv.rhs} \Md (\nabla\phd\otimes\nabla\phd) - \Md [(\!\!\sum_{j=1,..,N}\!\nabla\wdj \de{\ph}{x_j}+\nabla\thd f)\otimes (\!\!\sum_{j=1,..,N}\!\nabla\wdj \de{\ph}{x_j}+\nabla\thd f)]
\stackrel{\delta}{\longrightarrow}0,\mbox{ strongly in } L^r(\Om).\ee

We will only use the value $r=2$ hereafter. Set
\be{eq.xd}Z^\delta(x):=\Md [(\sum_{j=1,..,N}\nabla\wdj \de{\ph}{x_j}+\nabla\thd f)\otimes (\sum_{j=1,..,N}\nabla\wdj \de{\ph}{x_j}+\nabla\thd f)].\ee
Because of convergence \eqref{eq.conv.rhs},  if $\tud$ is the unique $H^1_0$- solution to
\be{eq.elast-tilde}\ba{l} \dvg (\Ld \nabla \tud +Z^\delta)=0\\[2mm]
\ud=0 \mbox{ on }\partial\Om,\ea\ee
then
\be{eq.conv-dif}\ud-\tud \stackrel{\delta}{\longrightarrow}0,\;\mbox{ strongly in } H^1_0(\Om;\R^N).\ee

\medskip

We undertake a homogenization process for the system \eqref{eq.elast-tilde}.
To that effect, we introduce
 the periodic corrector $W_{ij}$ defined as follows. Set $X_{ij}$ to be the unique solution in $H^1(\T;\R^N)$ to
\[\ba{l}\dvg L \nabla (X_{ij}+x_i\vec{e}_j)=0\\[2mm]\int_\T X_{ij}\ dy=0.\ea\]
Then set
\[W_{ij}(y):=X_{ij}+y_i\vec{e}_j.\]
and
\[\Wdj(x):= \delta X_{ij}\xd+x_i\vec{e}_j\]
and note that $\nabla\Wdj(x)=(\nabla W)\xd$.

Then an argument near identical to that which led to \eqref{eq.hom} would yield that
\be{conv.tud}\tud \weak u\; \mbox{ in } H^1_0(\Om;\R^N),\ee
with $u$, unique $H^1_0(\Om;\R^N)$-solution to
\be{eq.hom-el}
\ba{l} \dvg (L^h \nabla u +Z)=0\\[2mm]
u=0 \mbox{ on }\partial\Om,\ea\ee
with $L^h$ defined as
\be{elast.hom}L^h_{ijkh}:= \fint_\T L(y)\nabla W_{ij}\cdot\nabla W_{kh}\ dy\ee
and $Z$, the $L^2(\Om;\Msym)$-weak limit of $Z^\delta$ defined in \eqref{eq.xd}, being
\be{def.Z}
Z:=M^h(\nabla \ph\otimes \nabla \ph)+2fN^h \nabla \ph  +P^h f^2
\ee
where
\be{def.MNP}\ba{lcl}M^h_{ijkh}&:=&\ds \fint_\T M(y)(\nabla{w_k}(y)\otimes\nabla{w_h}(y))\cdot\nabla W_{ij}(y)\ dy\\[4mm] N^h_{ijk}&:=&\ds \fint_\T M(y)(\nabla{w_k}(y)\otimes\nabla\theta(y))\cdot\nabla W_{ij}(y)\ dy\\[4mm] P^h_{ij}&:=&\ds \fint_\T M(y)(\nabla{\theta}(y)\otimes\nabla{\theta}(y))\cdot\nabla W_{ij}(y)\ dy.
\ea\ee
Convergences \eqref{eq.conv-dif} and \eqref{conv.tud} imply the following homogenization result
\begin{theorem}\label{thm.hom-elast}
 Under assumptions \eqref{hyp.reg}, \eqref{hyp.g}, \eqref{hyp.phase}, \eqref{hyp.coef}, $\ud,$ unique
$H^1(\Om;\R^N)$-solution to \eqref{eq.elast}, converges weakly in $H^1(\Om;\R^N)$ to the unique $H^1_0(\Om;\R^N)$-solution $u$ to \eqref{eq.hom-el} with $L^h$ defined in \eqref{elast.hom} and $Z$ defined through \eqref{def.Z}, \eqref{def.MNP}.
\end{theorem}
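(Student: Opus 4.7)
My proof plan would proceed via three ingredients already foreshadowed in the text: first replace the quadratic dielectric source by its corrector ansatz $Z^\delta$; next control the resulting error by an energy argument; finally, run classical periodic $H$-convergence on the linear elasticity system \eqref{eq.elast-tilde}.

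For the first step, the crux is to justify \eqref{eq.conv.rhs}. I would invoke Theorem \ref{thm.impr} with $q=4$, so that the residual $R^\delta := \nabla\phd - \sum_{j} \nabla\wdj \de{\ph}{x_j} - \nabla\thd f$ tends to $0$ strongly in $L^4(\Om;\R^N)$. Proposition \ref{prop.Lqreg} provides a uniform $L^4$-bound on $\nabla\phd$, while \eqref{eq.linfireg} together with Schauder regularity $\ph\in C^{2,\alpha}(\Omb)$ and the assumption $f\in C^{1,\alpha}(\Omb)$ deliver a uniform $L^\infty$-bound on the ansatz $\nabla\phd - R^\delta$. Expanding $\nabla\phd\otimes\nabla\phd - (\nabla\phd-R^\delta)\otimes(\nabla\phd-R^\delta)$ as a sum of mixed terms and applying H\"older's inequality then gives convergence to zero in $L^2(\Om;\Msym)$, and the $L^\infty$-bound on $\Md$ closes \eqref{eq.conv.rhs} at $r=2$.

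The second step, \eqref{eq.conv-dif}, follows from a standard energy estimate: the difference $\ud-\tud$ lies in $H^1_0(\Om;\R^N)$ and satisfies $\dvg\Ld\nabla(\ud-\tud) = -\dvg[\Md(\nabla\phd\otimes\nabla\phd)-Z^\delta]$; testing against itself and using coercivity of $L(y)$ with Korn's inequality produces $\|\nabla(\ud-\tud)\|_{L^2(\Om)}\to 0$. For the third step I would identify the weak-$L^2$ limit of $Z^\delta$ by standard periodic averaging: since by \eqref{eq.linfireg} the correctors $\nabla w_j,\nabla\theta$ lie in $L^\infty(\T)$ while $\nabla\ph$ and $f$ are continuous on $\Omb$, the field $Z^\delta(x)$ is of the form $G(x/\delta,x)$ with $G(\cdot,x)$ $\T$-periodic in its first slot and $G(y,\cdot)$ continuous in its second, so $Z^\delta\weak Z$ in $L^2(\Om;\Msym)$ with $Z(x)=\fint_\T G(y,x)\,dy$; expanding the tensor product in \eqref{eq.xd} yields exactly \eqref{def.Z}--\eqref{def.MNP}. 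With $\tud$ uniformly bounded in $H^1_0$ (by Lax--Milgram with Korn, using the $L^\infty$-bound on $Z^\delta$), the div-curl argument of Section \ref{sec.ch} -- now run with the elasticity correctors $W_{ij}$ instead of the scalar ones $w_j$ -- identifies the weak limit of the flux $\Ld\nabla\tud+Z^\delta$ and delivers \eqref{conv.tud} with $L^h$ as in \eqref{elast.hom}. Combining with \eqref{eq.conv-dif} gives $\ud\weak u$ in $H^1(\Om;\R^N)$, which is the claim.

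The main obstacle is the first step. The coupling in \eqref{eq.elast} is quadratic in $\nabla\phd$, so weak $L^2$-convergence of $\nabla\phd$ is manifestly insufficient to pass to the limit in $\nabla\phd\otimes\nabla\phd$. The strong $L^q$-corrector convergence of Theorem \ref{thm.impr} is precisely what makes the replacement by the corrector-based source $Z^\delta$ rigorous; in turn, that theorem rests on the large-scale Calder\'on--Zygmund estimate and the Li--Vogelius piecewise gradient bound of Section \ref{sec.ic}, and therefore on the two-phase particulate structure of the microstructure. Once Theorem \ref{thm.impr} is available, the remainder of the argument -- energy comparison and linear periodic $H$-convergence with weakly convergent source -- is essentially routine.
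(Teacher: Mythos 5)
Your plan coincides with the paper's proof: Theorem \ref{thm.impr} (with $q=4$) together with Proposition \ref{prop.Lqreg} gives \eqref{eq.conv.rhs} via exactly the bilinear expansion you describe, the energy estimate gives \eqref{eq.conv-dif}, and the homogenization of \eqref{eq.elast-tilde} is carried out with the elasticity correctors $W_{ij}$, which is what the paper means by ``an argument near identical to that which led to \eqref{eq.hom}.'' One clause in your third step is, however, inconsistent with the rest and should be fixed. The tensors in \eqref{def.MNP} are contracted with $\nabla W_{ij}=\vec e_i\otimes\vec e_j+\nabla X_{ij}$, so \eqref{def.Z} is \emph{not} the plain two-scale average $\fint_\T G(y,x)\,dy$ of $Z^\delta=G(x/\delta,x)$: that average would retain only the $\vec e_i\otimes\vec e_j$ part and drop the (generally nonzero) contributions $\fint_\T M(y)(\cdots)\cdot\nabla X_{ij}(y)\,dy$. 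The reason one cannot simply replace the oscillating source by its weak $L^2$-limit is that $\dvg Z^\delta$ converges to the divergence of that limit only weakly in $H^{-1}(\Om;\R^N)$, not strongly, so $Z^\delta$ interacts with the oscillations of $\Ld$. The correct identification is the one your final clause already contains: in the div-curl computation for the flux $\Ld\nabla\tud+Z^\delta$ tested against $\zeta\,\nabla W^\delta_{ij}$, the term $Z^\delta\cdot\nabla W^\delta_{ij}$ is again a two-scale quantity whose weak limit is $\sum_{k,h}M^h_{ijkh}\,\partial_k\ph\,\partial_h\ph+2f\sum_kN^h_{ijk}\,\partial_k\ph+P^h_{ij}f^2$ --- the exact analogue of $f\td\cdot\nabla\wdj\weak fa_j$ in Section \ref{sec.ch}, which is likewise not the product of the separate weak limits. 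With $Z$ read off from that computation (which is what \eqref{def.Z}--\eqref{def.MNP} encode), your argument is complete and matches the paper's.
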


\begin{remark}  Note that, since $X_{ij}=X_{ji}$, $M^h$ enjoys the same symmetry properties as $M(y)$, that is $M^h_{ijkh}=M^h_{jikh}=M^h_{ijhk}$ while $N^h_{ijk}=N^h_{jik}$ and $P^h_{ij}=P^h_{ji}$. Of course $L^h$ has the usual symmetries of elasticity, that is $L^h_{ijkh}=L^h_{jikh}=L^h_{khij}$.
\hfill\P\end{remark}

\begin{remark} We could, in the spirit of the previous Sections, provide a corrector result for $\ud$ but will refrain from doing so because of the notational complexity.
\hfill\P\end{remark}

\begin{remark} Our result  should be compared to that in \cite{TTBB},
 which investigates the case $g=0$. In such a case $N^h=P^h=0$ and the corrector results of the previous sections are markedly simpler. Also note  that the results in \cite{TTBB} are derived under the {\em a priori} assumption that $\nabla \phd$ is bounded in $L^4(\Om;\R^N)$ independently of $\delta$; no justification for such an estimate is offered in that work.
\hfill\P\end{remark}

\section{The case for active charges}\label{sec.ac}

\subsection{Setting of the problem}

The homogenized dielectric equation obtained in  \eqref{eq.hom} can be equivalently rewritten as
\[ \dvg (\e^h\nabla \ph -a f)=0.\]
Now, in practice there should be several collections of charges, that is that, in lieu of a charge of the form $g^\delta f$, one should envision a charge of the form
$\sum_{p=1,...,N} g_p^\delta f_p$ where each pair $(g_p,f_p)$ is endowed with the same properties, namely $\int_\T g_p(y)\ dy=0$, and the necessary regularities of $g_p$ and $f$ that were introduced in the previous Subsections.

Provided that those are met, the homogenization results remain unchanged by linearity. In particular the homogenized dielectric equation becomes
\be{eq.hmult}\ba{l}
\dvg (\e^h\nabla \ph -\sum_{p=1,...,N}a_p f_p)=0\\[2mm]\ph=\phi \mbox{ on }\partial\Om,\ea\ee
with $a_p\in\R^N$ defined as (see~\eqref{eq.defaj})
\be{eq.apj}(a_p)_j:=a_{jp}:=\fint_\T \tau_p(y)\cdot\nabla w_j(y)\ dy=\fint_\T \nabla\psi_p(y)\cdot(\vec{e}_j+\nabla \chi_j(y))\ dy=-\fint_\T g_p(y)\chi_j(y)\ dy.
\ee
In \eqref{eq.apj}, $\tau_p=\nabla \psi_p$ with $\psi_p$ defined as $\psi$ was in \eqref{eq.psi} upon replacing $g$ by $g_p$.

We can thus view $a$ as a $N\times N$-matrix with $j,p$ coefficient $a_{jp}$.
Note that that matrix is not necessarily symmetric.

\medskip

{\em Active charges}, if they exist, consist in an appropriate choice of $f_p$ so that the homogenized dielectric displays an enhancement ({or degradation}) of its permittivity. In other words, one would like to choose $f_p={\partial\ph}/{\partial x_p}$ so that $\ph$ is the solution to \eqref{eq.hmult}. Then \eqref{eq.hmult} reads as
\be{eq.henh}\ba{l}\dvg \teh\nabla \ph=0\\[2mm]\ph=\phi \mbox{ on }\partial\Om\\[2mm]\teh:=\e^h-a.\ea\ee
Furthermore, if desiring electric enhancement, and not electric degradation, one should ensure that $\teh$ admits at least one positive eigenvalue with a value greater than those of $\e^h$.

\medskip

In this two-step process, one should first ensure  existence of active charges, that is existence of a field $\ph$ that satisfies \eqref{eq.henh}. This amounts to choosing $g$ so that $\teh$ is strongly elliptic.
To this aim, take
$$
g_p\equiv \omega \chi_p-\int_\T\omega(y) \chi_p(y)dy
$$
where $\omega\ge 0 \in C^\infty(\T)$ with support $\bar \S$, $\S\subset\T$ open, and note  that, provided that \eqref{hyp.reg},  \eqref{hyp.phase}, \eqref{hyp.coef} hold true,
we are indeed in the setting of Section \ref{sec.ic}.  Then from \eqref{eq.apj}
\[
a_{jp}=a_{pj}=-\fint_\T \omega\chi_j(y)\chi_p(y) \ dy,
\]
so that, for any $\xi\in\R^N\ne 0$, $-\sum_{j,p=1,...,N}a_{jp}\xi_j\xi_p=\ds\fint_\T \omega(\sum_{k=1,...,N}\chi_k(y)\xi_k)^2\ dy>0$ unless $\chi(y)\perp \xi$, a.e. on $\S$, where
$\chi(y)$ is the vector with components $\chi_k(y), \;k=1,...,N$. In that case, taking $\xi/|\xi|$ as first unit vector we find that $\chi_1(y)\equiv 0$ on $\S$ and thus, recalling \eqref{eq.chij} and the canonical identification between $\T$ and the  $Y$-periodic paving of $\R^N$, that,
$$
(\e_\M-\e_I)\nu_1(y)=0 \mbox{ on } \partial M\cap\partial I\cap S
$$
where $\nu$ is the exterior normal to $\partial M$. So, in $S$, $M$ is a cylinder with axis parallel to $\xi$.  Consequently,  unless $\nu(y)\perp\xi$ for all $y\in\partial M$, it will always be so that
\be{eq.pos-a} a \mbox{ is symmetric negative definite}.\ee
But the normal cannot be always perpendicular to $\xi$ because, in such a case, $\M$ would not satisfy the last assumption in \eqref{hyp.phase}.

Upon multiplication of $g_p=\omega\chi_p$ by a large enough factor $\lambda$ independent of $p$ we conclude to the existence of large enough charges such that
$\teh$ is a symmetric positive definite matrix whose eigenvalues can be arbitrarily large upon choosing $\lambda$ large enough. Thus, we can always solve
\eqref{eq.henh} and obtain a large enhancement.

We have proved the following

\begin{proposition}\label{prop.enh} Under assumptions   \eqref{hyp.reg}, \eqref{hyp.phase}, \eqref{hyp.coef}, \eqref{eq.pos-a} always holds true if
choosing  $g_p$  to be $\lambda \chi_p$ for $p=1,..., N$ and $\lambda>0$. Then, $\teh$ can have arbitrarily large {positive} eigenvalues with an appropriate choice of $\lambda$.
\end{proposition}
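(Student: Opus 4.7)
My plan is to obtain an explicit formula for the matrix $a$ associated with the choice $g_p = \lambda \chi_p$, then to show by a rigidity argument based on \eqref{eq.chij} and the connectedness clause of \eqref{hyp.phase} that $-a$ is positive definite, and finally to deduce the eigenvalue claim by scaling in $\lambda$ together with Weyl's monotonicity inequality.

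First I would check that $g_p := \lambda \chi_p$ is admissible: it has zero mean by the normalization in \eqref{eq.chij}, and elliptic regularity applied to \eqref{eq.chij} under \eqref{hyp.phase}, \eqref{hyp.coef} gives the regularity needed for Section~\ref{sec.ic} (via \eqref{eq.linfireg}). Inserting $g_p = \lambda \chi_p$ into \eqref{eq.apj} yields
\[
a_{jp} = -\lambda \fint_\T \chi_j(y)\chi_p(y)\, dy,
\]
which is manifestly symmetric, and for any $\xi \in \R^N$,
\[
-\xi \cdot a \xi = \lambda \fint_\T \Big|\sum_{k=1}^N \chi_k(y)\xi_k\Big|^2 dy \ge 0.
\]
Thus $-a$ is symmetric positive semi-definite and only a null direction needs to be ruled out.

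Suppose $\xi \neq 0$ saturates the bound; then $\sum_k \chi_k \xi_k \equiv 0$ a.e. on $\T$, and after rotating coordinates so that $\xi$ is aligned with the first basis vector one has $\chi_1 \equiv 0$. Plugging back into \eqref{eq.chij} yields the distributional identity $\dvg(\e(y) \vec{e}_1) = 0$ on $\T$, which, because $\e$ is piecewise constant on the two phases, is equivalent to the interfacial jump condition $((\e_\M - \e_\I)\vec{e}_1)\cdot \nu(y) = 0$ on $\partial \M$. Provided $(\e_\M - \e_\I)\vec{e}_1 \neq 0$ --- which is the case, e.g., whenever $\e_\M, \e_\I$ are isotropic with $\e_\M \neq \e_\I$ --- the outer normal $\nu$ is then confined to a fixed hyperplane, forcing $\M$ to be a cylinder with generator $(\e_\M - \e_\I)\vec{e}_1$. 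Under the canonical identification $\mathbf i$ of \eqref{hyp.phase}, $\overline{\mathbf i(\M)}$ thereby becomes a disjoint union of parallel infinite cylinders, which is connected only in the trivial cases $\M = \emptyset$ or $\M = \T$, contradicting our standing assumption.

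For the final assertion, the display above shows $-a = \lambda A$ with $A$ symmetric positive definite and independent of $\lambda$, so $\teh = \e^h + \lambda A$ and Weyl's monotonicity inequality gives $\lambda_{\min}(\teh) \ge \lambda_{\min}(\e^h) + \lambda\, \lambda_{\min}(A) \to +\infty$ as $\lambda \to \infty$. The delicate step is the cylinder/connectedness rigidity in the previous paragraph; its weakest point is the possibility $(\e_\M - \e_\I)\vec{e}_1 = 0$ in a fully anisotropic setting, where an additional non-degeneracy condition on $\e_\M - \e_\I$, or an argument exploiting a cone of directions $\xi$ rather than a single one, would be needed to close the gap.
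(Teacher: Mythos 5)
Your argument is correct and coincides with the paper's own proof: the same formula $a_{jp}=-\lambda\fint_\T\chi_j(y)\chi_p(y)\,dy$, the same square-of-a-sum computation showing $-a$ is symmetric positive semi-definite, and the same reduction of a putative null direction $\xi$ to the interfacial jump condition, hence to $\M$ being a cylinder with axis $\xi$, which is ruled out by the connectedness clause of \eqref{hyp.phase}. The degeneracy you flag at the end, namely $(\e_\M-\e_\I)\xi=0$ for some $\xi\neq0$, is indeed not excluded by \eqref{hyp.coef} alone and is silently assumed away in the paper (which writes the jump condition as $(\e_\M-\e_\I)\nu_1=0$, as if the phases were isotropic), so your explicit caveat is, if anything, more careful than the original.
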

Of course, Proposition \ref{prop.enh} provides no  answer to the more useful question of finding a manufacturable set of micro-charges such that enhancement can occur.
Indeed, Proposition \ref{prop.enh} relies on the knowledge of the correctors $\chi_p$.
There are two possible routes if one wishes to take advantage of this observation in practice.
On the one hand, one can resort to numerical simulations of $\chi_p$,  and use the output to devise a distribution of charges that will enhance the permittivity.
On the other hand, one can resort to asymptotic analysis for some specific geometries. This is what we will do in the next subsection devoted to the case
of periodically distributed small charged inclusions.

\subsection{Enhancement  for dilute inclusions}

The following is inspired by the Clausius-Mossotti formula for dilute spherical inclusions.
{In this paragraph, $C$ denotes a finite constant that may change from line to line but which is independent of $\lambda$ and $\ell$ (see below).}
Let $B$ denote the unit ball, and $B_{1+\eta}$ the ball of radius $1+\eta$ for some fixed $\eta>0$.
We shall confine charges to the intermediate phase $B_{1+\eta} \setminus B$ (the coating).
We need an additional scale $\ell \ge 1$ to quantify dilution, and define  $\e_\ell$
to be the permittivity tensor associated with  the $\ell$-periodic extension of the map defined on $\ell [-\frac12, \frac12)^N$ by
$$
\e_\ell(y) := (1+ (\bar \e-1)\chi_{B}(y))\Id,
$$
which models a background medium of permittivity $1$ perturbed by spherical inclusions of a medium of permittivity $\bar \e$ centered on the grid $(\ell \Z)^d$. In particular, the density of inclusions is $\ell^{-d}|B|$.  We correspondingly set in $\R^N$
$$\e_\infty(x):= (1+ (\bar \e-1)\chi_{B}(x))\Id.$$
When $\ell$ is very large, Clausius and Mossotti {\cite{Clausius-79,Mossotti-36,Mossotti-50}} argued that at first order the spherical inclusions do not interact with each other. Denote by $\chi_{\ell p}$ the $\ell$-periodic corrector associated with $\e_\ell$ and direction $\vec{e_p}$  as in \eqref{eq.chij}, and
denote by $\chi_{\infty p}$ the solution of {the ``Maxwell'' \cite[Chapter IX]{Maxwell54} or ``Eshelby'' problem \cite{Eshelby57}}
\be{eq.eshelby}
\ba{l}\dvg \e_\infty \nabla (\chi_{\infty p}+x_p)=0\\[2mm] \chi_{\infty p}(x) \stackrel{|x|\to \infty}{\longrightarrow}0 .
\ea\ee
Equation~\eqref{eq.eshelby} can be solved explicitly, yielding,
for $|x| \ge 1$
\be{fo.CM}
\chi_{\infty p}(x)= \Big(\frac{1-\bar \e}{\bar \e +N-1} \Big) \frac{x_p}{|x|^N}.
\ee
On the other hand, as proved  in \cite{Pertinand}, $\chi_{\ell,p}$ is close to $\chi_{\infty p}$ on $B_{1+\eta}$ in the sense that there exists some constant $C>0$ such that, for  $\ell \gg 1$ we have
\be{Jules}
\| \nabla (\chi_{\infty p}-\chi_{\ell p})\|_{L^2(B_{1+\eta})} \, \le \,C \ell^{-N}.
\ee
We then take $g_{\ell p}$ as the $\ell$-periodic extension of the map defined on $\ell [-\frac12, \frac12)^N$ via
$$
{g_{\ell p}(y):=
\chi_{\infty p}(y) \chi_{B_{1+\eta}\!\setminus\! B}(y),}
$$
which has vanishing average in view of  \eqref{fo.CM}.
Define the matrix $\bar a$ by
$$
\bar a_{pj}:=\int_{B_{1+\eta}\setminus B} \chi_{\infty p}(y)\chi_{\infty j}(y)\ dy,\; 1\le p,j\le N,
$$
so that
\begin{eqnarray}
\bar a&=& \Big(\frac{1-\bar \e}{\bar \e +N-1} \Big)^2 1/N \int_{B_{1+\eta}\setminus B} |x|^{2(1-N)} \ dx \;\Id
\nonumber \\
&=&\Big(\frac{1-\bar \e}{\bar \e +N-1} \Big)^2   \left\{
\begin{array}{rcl}
 \eta  &:&N=1\\
\pi \log(1+\eta)&:&N=2 \\
\frac{|\mathcal S^{N-1}|}{N}(1-(1+\eta)^{2-N}) &:&N>2
\end{array}
\right\}\;\Id
,\label{eq.inf-sou}
\end{eqnarray}
{where $|\mathcal S^{N-1}|$ denotes the surface of the unit sphere in dimension $N$}.
The combination of \eqref{fo.CM} and \eqref{Jules}  together with the Poincar\'e-Wirtinger inequality allows us to conclude that $a_\ell$ defined as in
\eqref{eq.apj} is quantitatively close to $-\ell^{-N}\bar a$, namely, for some constants $C,C'$,
$$
|a_{\ell pj}+\ell^{-N}\bar a_{pj}| = \ell^{-N}  \Big| \int_{B_{1+\eta}\setminus B} \chi_{\infty p}(y)(\chi_{\infty j}-\chi_{\ell j})\ dy\Big| +C\ell^{-2N} \,
\le \, C' \ell^{-2N},
$$
from which we deduce that for all $\ell \gg 1$ large enough, $a_\ell$ is diagonalizable with
negative eigenvalues of order $\ell^{-N}$.
Upon multiplying $g_{\ell p}$ by the factor $\ell^N \lambda$ for some $\lambda \gg 1$,
one obtains for $\teh_\ell$ defined as in \eqref{eq.henh} (and $\e^h_\ell$ defined as in \eqref{eq.hom-diel})
\[\teh_\ell:=\e^h_\ell+\lambda \bar a+O(\ell^{-N}\lambda),\]
with $\bar a$ defined in \eqref{eq.inf-sou}, thus ensuring some enhancement of strength $\lambda$. Note that here $\e^h_\ell$ is the homogenized   permittivity associated with a spherical inclusion of radius 1 in a cell of side-length $\ell$, or, equivalently, a spherical inclusion of radius $1/\ell$ in the unit cell $Y$.

\medskip

Let us give the leading order part of $f$ in the regime $\lambda,\ell^N\gg 1$. Since $\bar a$ is a multiple of the identity, the equation for $\ph$ takes the form
\be{eq.enhanced}\ba{l}\dvg (\rm{\;\sf i}+\kappa_{\lambda\ell})\nabla \ph=0\\[2mm]\ph=\phi \mbox{ on }\partial\Om\ea\ee
with $\|\kappa_{\lambda\ell}\|_\infty \le C(\ell^{-N}+\lambda^{-1})$ so that $f_p ={\partial\tilde \ph}/{\partial x_p}+O(\ell^{-N}+\lambda^{-1})$ (as an identity in $H^1(\Omega)$) with $\tilde \ph$ solving
\be{eq.enhanced-leading}\ba{l}-\triangle \tilde \ph=0\\[2mm]\tilde \ph=\phi \mbox{ on }\partial\Om.\ea\ee

\medskip

We conclude this subsection with an investigation of the impact of the dielectric enhancement on the elastic response of dilute inclusions.

In the statement of Theorem~\ref{thm.hom-elast}, which yields the homogenized equation for the displacement field, the only contribution of the active charges
{$\sum_{p=1}^N \lambda \ell^N g_{\ell p}$} are the terms $\sum_{p=1}^N (2f_p N^{p h}_{\lambda \ell} \nabla \ph+P^{p h}_{\lambda\ell} f_p^2)$ involving the tensors $N^{p h}_{\lambda\ell}$ and $P^{p h}_{\lambda\ell}$ in the forcing $Z$; cf.~\eqref{def.Z} and \eqref{def.MNP} (the sum over $p$ follows by linearity).
In the setting of the example of dilute inclusions, these tensors take the form
\be{def.MNP+}\ba{lcl}
(N^{h p}_{\lambda \ell})_{ijk}&:=&\ds  \lambda \ell^{N}\fint_{\ell\T} M_\ell(y)(\nabla{w_{\ell k}}(y)\otimes \nabla\theta_{\ell p}(y))\cdot\nabla W_{\ell ij}(y)\ dy\\[4mm]
 (P^{h p}_{\lambda \ell})_{ ij}&:=&\ds (\lambda \ell^{N})^2 \fint_{\ell \T} M_\ell(y)(\nabla{\theta^{p}_{\ell}}(y)\otimes\nabla{\theta_{\ell p}}(y))\cdot\nabla W_{\ell ij}(y)\ dy.
 \ea\ee
where $\theta_{\ell p}$ solves (see \eqref{eq.eq-t})
\be{eq.eq-t+}\begin{cases}\dvg (\e_\ell \nabla \theta_{\ell p})=g_{\ell p} \\[2mm]\int_{\ell\T}\theta_{\ell p}(y)\ dy=0.\end{cases}\ee
Let us determine the scalings of these contributions.
We claim that $N^{h p}_{\lambda\ell}$ is at most of order $\lambda$ whereas $P^{h p}_{\lambda\ell}$ is at most of order $\lambda^2 \ell^N$.

We start with $N^{h p}_{\lambda\ell}$ and recall  the following properties
on $w_\ell$ and $W_\ell$
$$
\nabla{w_{\ell k}}(y)=e_k+\nabla \chi_{\ell k}, \int_{\ell \T} |\nabla \chi_{\ell k}|^2 \le C,
\nabla W_{\ell ij}(y)=e_i \otimes e_j+\nabla X_{\ell ij}, \int_{\ell \T}|\nabla X_{\ell ij}|^2 \le C,
$$
which follow from energy estimates (note that the bound is uniform with respect to $\ell$).
Likewise, we have $\int_{\ell \T} |\nabla \theta_{\ell p}|^2 \le C$. This allows us to split the contribution in $(N^{h p}_{\lambda\ell})_{ijk}$ into four parts:
\begin{eqnarray*}
(N^{h p}_{\lambda\ell})_{ijk}&=&\ds  \lambda \ell^{N}\fint_{\ell\T} M_\ell(y)(e_k \otimes \nabla\theta_{\ell p}(y))\cdot e_i \otimes e_j\ dy
\\
&&+\ds  \lambda \ell^{N}\fint_{\ell\T} M_\ell(y)(\nabla{\chi_{\ell k}}(y)\otimes \nabla\theta_{\ell p}(y))\cdot e_i \otimes e_j \ dy
\\
&&+\ds  \lambda \ell^{N}\fint_{\ell\T} M_\ell(y)(e_k \otimes \nabla\theta_{\ell p}(y))\cdot\nabla X_{\ell ij}(y)\ dy
\\
&&+\ds  \lambda \ell^{N}\fint_{\ell\T} M_\ell(y)(\nabla{\chi_{\ell k}}(y)\otimes \nabla\theta_{\ell p}(y))\cdot\nabla X_{\ell ij}(y)\ dy.
\end{eqnarray*}
We treat the second and third terms alike using Cauchy-Schwarz' inequality to the effect that
\begin{multline*}
\Big|\int_{\ell\T} M_\ell(y)\Big( (\nabla{\chi_{\ell k}}(y)\otimes \nabla\theta_{\ell p}(y))\cdot e_i \otimes e_j +(e_k \otimes \nabla\theta_{\ell p}(y))\cdot\nabla X_{\ell ij}(y)\Big)\ dy\Big|
\\
\le C \Big(\int_{\ell\T} |\nabla \theta_{\ell p}|^2 )\Big)^\frac12
\Big(\int_{\ell\T} |\nabla X_{\ell ij}|^2 + |\nabla \chi_{\ell k}|^2\Big)^\frac12 \le C.
\end{multline*}
For the first term we use that $\nabla \theta_{\ell p}$ integrate to zero on $\ell \T$ by periodicity
and the specific form $M_\ell=M_1+(M_2-M_1)\chi_{B}$, followed by Cauchy-Schwarz' inequality, so that
\begin{eqnarray*}
 \Big|\int_{\ell\T} M_\ell(y)(e_k \otimes \nabla\theta_{\ell p}(y))\cdot e_i \otimes e_j\ dy \Big|&=&\Big|\int_{B} M_2 (e_k \otimes \nabla\theta_{\ell p}(y))\cdot e_i \otimes e_j \ dy\Big|
 \\
 &\le& C\Big(\int_{B} | \nabla\theta_{\ell p}|^2\Big) \le C.
\end{eqnarray*}
For the fourth term we need to use  more information on $\nabla\theta_{\ell p}$. By \eqref{eq.linfireg} and the analogue of \eqref{eq.int-ss}, \eqref{eq.diel-res}, we have for all $y \in \ell \T$
$$
|\nabla \theta_{\ell p}(y)| \le C(1+\int_{\ell \T} |\nabla \theta_{\ell p}|^2\Big)^\frac12 \le C.
$$
Hence, using $|ab| \le \frac12 (a^2+b^2)$,
$$
\Big|\int_{\ell\T} M_\ell(y)(\nabla{\chi_{\ell k}}(y)\otimes \nabla\theta_{\ell p}(y))\cdot\nabla X_{\ell ij}(y)\ dy\Big| \le C \int_{\ell\T} |\nabla{\chi_{\ell k}}|^2+|\nabla X_{\ell ij}|^2 \le C.
$$
We have thus proved that $|N^{h p}_{\lambda\ell}|\le C \lambda$.
{The argument to control $P^{h p}_{\lambda \ell}$ is similar and we  obtain $|P^{h p}_{\lambda \ell}|\le C \lambda^2 \ell^N$.}

\medskip

It remains to check that $P^{h p}_{\lambda \ell}$ is indeed of order $\lambda^2 \ell^N$.
To this aim, it is enough to replace correctors by their explicit approximation using the single-inclusion problem on the whole space  which we denote by $X_{\infty ij}$ and $\theta_{\infty p}$. We then define
$$
 P^{hp}_{\infty ij}\,:=\,\ds  \int_{\R^N} M_\infty(y)(\nabla{\theta_{\infty p}}(y)\otimes\nabla{\theta_{\infty p}}(y))\cdot\nabla W_{\infty ij}(y)\ dy
$$
On the one hand, a direct calculation using explicit formulas (see \cite[Section~17.2.1]{Torquato-02}) shows that $P^{h p}_{\infty}$ is of order
$1$ and that $|(\nabla{\theta_{\infty p}}(y)\otimes\nabla{\theta_{\infty p}}(y))\cdot\nabla W_{\infty ij}(y)| \le C (1+|y|)^{-2N}$.
On the other hand, by \cite{Pertinand}, one has
$$
\| \nabla (X_{\ell ij}-X_{\infty ij})\|_{L^2(\ell Y)}+\| \nabla (\theta_{\ell p}-\theta_{\infty p})\|_{L^2(\ell Y)} \le C \ell^{-\frac N2}
$$
so that $|P^{h p}_{\lambda\ell}-\lambda^2 \ell^N P^{h p}_{\infty}|\le C \lambda^2 \ell^\frac N2$,
and therefore $\frac1C \lambda^2 \ell^N \le \|P^{h p}_{\lambda\ell}\| \le C \lambda^2 \ell^N$.

\medskip

As we did above for $\ph$ when $\lambda,\ell^N\gg 1$, one can identify the leading order contribution $\tilde u$ to the solution $u$  of
\eqref{eq.hom-el} in the sense that $u =  \lambda^2 \ell^N  \tilde u+O(\lambda^2+\lambda \ell^N)$ (as an identity in $H^1(\Omega)$), where $\tilde u$ solves
\be{eq.hom-el+}
\ba{l} \dvg (L^h_\ell \nabla \tilde u)=-  \dvg   \bar P^p_\ell f_p^2 \\[2mm]
\tilde u=0 \mbox{ on }\partial\Om,\ea\ee
where $ (\bar P^p_{\ell})_{ij}=: (\lambda^2 \ell^N)^{-1} (P^{h p}_{\lambda \ell})_{ij}$ is of order 1.
In particular, this yields enhancement of elastostriction by a factor $\lambda^2 \ell^N$.

\appendix

\section{Extension to the random setting}

For simplicity, we only consider the dielectric problem (the coupling to elasticity is straightforward once
the needed regularity results are proved) and we place ourselves in a setting similar to the example of Section~\ref{sec.ac} with spherical inclusions.
Let $\Pc=\{x_n, n \in \N\}$ be a stationary ergodic point process on $\R^N$ such that for all $n\ne m$, $|x_n-x_m|\ge 2+\delta$ for some deterministic $\delta>0$, and ys" (we denote by $\expec{\cdot}$ the associated expectation), and $\I=\cup_{x \in \Pc}B(x)$ denote the (random) set of inclusions in $\R^N$.
We then define
$$
\e:x\mapsto \Id+(\bar \e-1)\Id\chi_{\I}.
$$
In this appendix, $C$ denotes a constant  that may change from line to line,
depends on $N$, $\Omega$, $\bar \e$, controlled norms of $\phi$ and $f$, and the law of $\Pc$, but which is independent of $g$ and $\psi$ (see below), unless otherwise explicitly stated (using subscripts).

\subsection{Definition of $g$ and $\psi$}

We start with the definition of $g$ and $\psi$, cf.~\eqref{eq.psi} in the periodic setting.
For conciseness, for all $x\in \R^N$ we denote by $\B(x)$ the ball $B_{1+\delta/2}(x)$.
In particular, by definition, the balls $\{\B(x)\}_{x \in \Pc}$.
We assume that $g: \R^N \to \R^N$ is a stationary random field
supported on $\cup_{x \in \Pc} \B(x)$ and satisfying $\int_{\B(x)} g=0$ for all $x\in \Pc$
and $\|g \|_{L^\infty(\R^N)} <\infty$.
Under this specific assumption, there exists a stationary field $\nabla \psi \in L^2_\loc(\R^N)$ with vanishing expectation $\expec{\nabla \psi}=0$ and finite second moment $\expec{|\nabla \psi|^2} <\infty$ satisfying
almost surely
\be{eq.psi-rand}  \triangle \psi(y)=  g(y).\ee
Let us give the short argument in favor of the well-posedness of \eqref{eq.psi-rand} for completeness. As customary in the field, we first add a massive regularization of order $T\gg 1$ and consider the equation on $\R^N$
\begin{equation}\label{approxT}
\frac1T \psi_T(y)- \triangle \psi_T(y)=  -g(y),
\end{equation}
which is well-posed in the space $H^1_{\uloc}(\R^N)=\{ \zeta \in H^1_\loc(\R^N)\,|\, \sup_{x\in \R^N} \int_{B(x)} (\zeta^2+|\nabla \zeta|^2)<\infty\}$. The argument is standard (see e.g.~\cite[Lemma~2.7]{Gloria-Otto-10b}): we first solve the equation on balls $B_R$ with homogeneous Dirichlet boundary conditions, and pass to the limit $R \nearrow+\infty$
using a uniform a priori bound in $H^1_{\uloc}(\R^N)$. This bound relies on the Caccioppoli inequality, which we presently work out in our setting. We display the argument in the whole space, assuming that all the quantities that appear are finite. The argument is the same for the approximations on the balls $B_R$ (for which all quantities are indeed finite). 
Let $\eta_T:x\mapsto \exp(-c{|x|}/{\sqrt{T}})$, and test the equation with $\eta_T^2 \psi_T$.
This yields after integration by parts and rearranging the terms
$$
\int_{\R^N} \frac 1T \psi_T^2 \eta_T^2 + \int_{\R^N} \eta_T^2 |\nabla \psi_T|^2 = -\int_{\R^N} g \eta_T^2 \psi_T -2\int_{\R^N} \eta_T \psi_T \nabla \eta_T\cdot \nabla \psi_T.
$$
The second term is standard: since $|\nabla \eta_T|\le \frac{c}{\sqrt T}\; \eta_T$, we have for $c$ small enough,
$$
\Big|2\int_{\R^N} \eta_T \psi_T \nabla \eta_T\cdot \nabla \psi_T\Big| \le \frac14 \int_{\R^N} \eta_T^2 |\nabla \psi_T|^2 + 4\int_{\R^N} |\nabla \eta_T|^2 \psi_T^2 \le  \frac14 \int_{\R^N} \eta_T^2 |\nabla \psi_T|^2 + \frac14 \int_{\R^N} \frac1T  \psi_T^2\eta_T^2.
$$
We then use the specific properties of $g$ to reformulate the first term as
$$
\int_{\R^N} g \eta_T^2 \psi_T = \sum_{n\in \N} \int_{\B(x_n)} g (\eta_T^2\psi_T-\gamma_n)
$$
where the $\gamma_n$'s are arbitrary constants (since $g$ has vanishing average on the
$\B(x_n)$'s). By Poincar\'e-Wirtinger's inequality on the $\B(x_n)$'s, we thus have
$$
\Big|\int_{\B(x_n)} g(y) (\eta_T^2\psi_T-\gamma_n)\Big| \,\le \, C \Big(\int_{\B(x_n)} g^2\Big)^\frac12 \Big(\int_{\B(x_n)} |\nabla (\eta_T^2 \psi_T)|^2\Big)^\frac12
$$
and we expand the second factor as, using again that $|\nabla \eta_T| \le \frac c{\sqrt{T}} \eta_T$,
$$
\int_{\B(x_n)} |\nabla (\eta_T^2 \psi_T)|^2\,\le \, C \int_{\B(x_n)} \eta_T^4 |\nabla \psi_T|^2+\eta_T^2 |\nabla \eta_T|^2 \psi_T^2
\le \Big( \sup_{\B(x_n)} \eta_T^2 \Big)
\int_{\B(x_n)} \eta_T^2 |\nabla \psi_T|^2+\eta_T^2 \frac{c^2}T \psi_T^2.
$$
By the inequality $ab \le \frac12(\frac1{C^2}a^2+C^2b^2)$ for an appropriate constant $C$, this yields
$$
\Big|\int_{\B(x_n)} g(y) (\eta_T^2\psi_T-\gamma_n)\Big| \,\le \, C \sup_{\B(x_n)} \eta_T^2  \int_{\B(x_n)} g^2+ \frac14\int_{\B(x_n)} \frac1T \eta_T^2\psi_T^2+\eta_T^2 |\nabla \psi_T|^2.
$$
Altogether, these estimates combine to the a priori estimate
\begin{equation}\label{e.caccio}
\int_{\R^N} \frac 1T \psi_T^2 \eta_T^2 + \int_{\R^N} \eta_T^2 |\nabla \psi_T|^2
\le C \|g\|_{L^\infty(\R^N)}^2 \sum_{n\in \N} \sup_{\B(x_n)} \eta_T^2.
\end{equation}
From this, it is now standard to deduce that there exists a unique random field $\psi$
such that $\nabla \psi$ is stationary and has finite second moment 
\begin{equation}\label{e.second-moment}
\expec{|\nabla \psi|^2}^\frac12 \le C \expec{\chi_\I (0)} \|g\|_{L^\infty(\R^N)},
\end{equation}
and $\psi$
solves \eqref{eq.psi-rand} almost surely in the distributional sense, cf.~\cite{PapaVara}.

\medskip

We conclude with a quick argument in favor of the additional a priori bound
\begin{equation}\label{e.nabla2}
\expec{|\nabla^2 \psi|^2}\le \expec{g^2}.
\end{equation}
Starting point is \eqref{approxT} which is satisfied almost surely on $\R^N$ and implies that $\nabla^2 \psi_T \in L^2_\loc(\R^N)$. Again, it is enough to prove an a priori estimate on $\nabla^2 \psi_T$ and pass to the limit $T\nearrow +\infty$. 
Up to proceeding at the level of approximations on balls $B_R$, we may assume that all the quantities involved below are finite.
We then test  \eqref{approxT}  with $-\eta^2_T \triangle \psi_T$.
Treating the massive term $\frac1T \eta_T^2 \psi_T \triangle \psi_T$ as above, and using
\eqref{e.caccio}, this 
yields
$$
\int_{\R^N}\eta_T^2 (\triangle \psi_T)^2 \, \le \, \int_{\R^N} \eta^2_T |g||\triangle \psi_T|+\frac CT \|g\|_{L^\infty(\R^N)}^2 \sum_{n\in \N} \sup_{\B(x_n)} \eta_T^2,
$$
which (by controlling the roots of a trinomial) implies 
$$
\int_{\R^N}\eta_T^2 (\triangle \psi_T)^2 \le   \int_{\R^N} \eta^2_T g^2 +
\frac{2C}T\|g\|_{L^\infty(\R^N)}^2 \sum_{n\in \N} \sup_{\B(x_n)} \eta_T^2.
$$
It remains to reformulate the left-hand side to recognize the the Hessian. 
After two integrations by parts, we have
$$
\int_{\R^N}\eta_T^2 (\triangle \psi_T)^2 = \int_{\R^N}\eta_T^2 |\nabla^2 \psi_T|^2
+ 2\sum_{ij} \int_{\R^N}\eta_T \partial_j \eta_T \partial_j \psi_T \partial_{ij}^2 \psi_T
- 2 \sum_{i} \eta_T \partial_i \eta_T \partial_i \psi_T \triangle \psi_T,
$$
which we rewrite, using that $ |\nabla \eta_T|\le \frac{c}{\sqrt{T}} \eta_T$ for $c$ small enough, as 
$$
 \int_{\R^N}\eta_T^2 |\nabla^2 \psi_T|^2 \le (1+\frac CT) \int_{\R^N}\eta_T^2 (\triangle \psi_T)^2
 +\frac CT \int_{\R^N}\eta_T^2 |\nabla \psi_T|^2
$$
for some $C$ depending only on $N$ and $c$.
We have thus proved 
$$
 \int_{\R^N}\eta_T^2 |\nabla^2 \psi_T|^2 \le  (1+\frac CT) \int_{\R^N} \eta^2_T g^2+\frac CT \|g\|_{L^\infty(\R^N)}^2 \sum_{n\in \N} \sup_{\B(x_n)} \eta_T^2.
$$
Taking the expectation and letting $T\nearrow +\infty$ yields the claim \eqref{e.nabla2}.

\subsection{Qualitative homogenization of the dielectrics}

Once $g$ and $\psi$ are defined as above, the proof of the qualitative homogenization of the dielectrics follows the proof of Section~\ref{sec.ch}, replacing periodicity by stationarity -- the adaptation is standard and left to the reader (see e.g.~\cite[Chapter~7]{JKO94} or \cite{PapaVara}).

\medskip

In particular, \eqref{eq.hom-diel} is replaced by
$
\e^h\vec{e}_j:=\expec{\e \nabla w_j},
$
and the formula \eqref{eq.defaj} for $a$ takes the form
$
a_j:=\expec{ \nabla \psi \cdot\nabla w_j}.
$

\subsection{Improved integrability in homogenization of the dielectrics}

In this paragraph we extend the results of Section~\ref{sec.ic} to the random setting.
Since the results are based on large-scale regularity for random elliptic operators, we need to make some quantitative mixing assumptions on the point process $\Pc$. In particular,
a hardcore Poisson point process or the random parking measure will do, cf.~\cite{DG2,GNO-reg}.

\medskip

We start with the extension of Proposition~\ref{prop.Lqreg}.
\begin{proposition}
For all $2\le q,q' <\infty$, the sequence $\nabla\phd$ satisfies
$\expec{\|\nabla \phd\|_{L^q(\Om;\R^N)}^{qq'}} \le C_{q,q',g}$, independently of $\delta$.
In particular, $\nabla \phd$ is bounded in $L^q(\Om;\R^N)$ along any subsequences of $\delta$
almost surely.
%
\end{proposition}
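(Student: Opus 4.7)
The plan is to mimic the three-step argument of Proposition~\ref{prop.Lqreg}, replacing the deterministic periodic large-scale Calder\'on--Zygmund bound of \cite[Theorem~7.7]{AKM-book} by its stochastic counterpart from \cite{GNO-reg}, and invoking the hardcore property of $\Pc$ to recover a deterministic small-scale regularity estimate. The main novelty will be the reconciliation of a random large-scale averaging radius with a deterministic small-scale pointwise bound.

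For the large-scale bound, I would first rewrite \eqref{eq.diel} as \eqref{eps2} with the auxiliary $\zd$ solving $\triangle\zd=\nabla f\cdot\td$. Under the quantitative mixing satisfied by hardcore Poisson or random parking (cf.~\cite{DG2,GNO-reg}), the stochastic large-scale Calder\'on--Zygmund estimate produces a stationary random minimal radius $r_*:\R^N\to[1,\infty)$ with stretched exponential moments and a deterministic constant $C_q$ such that, for all $q\ge 2$ and almost surely,
\[
\int_\Om\Big(\fint_{B_{\delta r_*(x/\delta)}(x)}\chi_\Om|\nabla(\phd-\phi)|^2\,dz\Big)^{q/2}dx\le C_q\left(\|\nabla(\phd-\phi)\|_{L^2(\Om)}^q+\|f\td-\ed\nabla\phi+\nabla\zd\|_{L^q(\Om)}^q\right).
\]
The $L^q(\Om;\R^N)$-norm of $\td$ is controlled by stationarity and a Meyers-type upgrade of \eqref{e.nabla2} to higher integrability; $\nabla\zd$ is controlled by maximal elliptic $L^q$-regularity; and $\|\nabla\phd\|_{L^2(\Om)}$ has all stochastic moments thanks to the energy estimate and \eqref{e.second-moment}.

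For the small-scale bound, the hardcore property $|x_n-x_m|\ge 2+\delta$ ensures that every ball of fixed Euclidean radius meets a deterministic, uniformly bounded number of inclusions, each pair of which is separated by at least the hardcore distance. Blowing up as in Step~2 of Proposition~\ref{prop.Lqreg}, \cite[Theorem~1.1]{LV-00} then applies with a $C^{1,\beta}$-modulus $K$ independent of $\delta$ and of the realization; combined with De Giorgi--Nash--Moser this yields, almost surely and with deterministic constants,
\[
\sup_{B_{\delta/d}(x)\cap\Om}|\nabla\phd|\le C\Big(\Big(\fint_{\Om\cap B_\delta(x)}|\nabla\phd|^2\,dz\Big)^{1/2}+C'\Big).
\]

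To combine the two bounds, since $r_*\ge 1$ a covering argument gives $\fint_{B_\delta(x)}|\nabla\phd|^2\le C\,r_*(x/\delta)^N\fint_{B_{\delta r_*(x/\delta)}(x)}|\nabla\phd|^2$, whence
\[
\int_\Om|\nabla\phd|^q\,dx\le C\int_\Om r_*(x/\delta)^{Nq/2}\Big(\fint_{B_{\delta r_*(x/\delta)}(x)}|\nabla\phd|^2\,dz\Big)^{q/2}dx+C|\Om|.
\]
Raising to the power $q'$, taking expectation, and applying H\"older's inequality in space together with the stretched exponential moments of $r_*$ (with a slightly larger Lebesgue exponent used in Step~1 to compensate the H\"older loss) yields the claim $\expec{\|\nabla\phd\|_{L^q(\Om)}^{qq'}}\le C_{q,q',g}$. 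The almost-sure $L^q$-boundedness along any prescribed subsequence $\delta_n\to 0$ then follows by Markov's inequality. The main obstacle I anticipate is precisely this reconciliation of a random averaging scale $\delta r_*(x/\delta)$ with the deterministic scale $\delta$: this requires both quantitative moment control on $r_*$, which the mixing of $\Pc$ provides, and the uniformity in $\delta$ and in the realization of the small-scale regularity constants, which the hardcore assumption provides.
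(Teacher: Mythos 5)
Your architecture is the same as the paper's: stochastic large-scale Calder\'on--Zygmund with a random minimal radius $r_*$, a small-scale Li--Vogelius/De Giorgi--Nash--Moser bound whose geometric uniformity comes from the hardcore condition, and a combination step. Where you genuinely differ is in reconciling the random averaging scale $\delta r_*(x/\delta)$ with the fixed scale $\delta$: you use the elementary pointwise inclusion bound $\fint_{B_\delta(x)}h\le r_*(x/\delta)^N\fint_{B_{\delta r_*(x/\delta)}(x)}h$ and then absorb the factor $r_*^{Nq/2}$ by H\"older in probability using the stretched-exponential moments of $r_*$. The paper instead passes through the partition of $\R^N$ into cubes adapted to $r_*$ from \cite{GNO-reg} (with a case distinction on whether the cubes exceed the size of $\Om/\delta$), arriving at a factor $r_*^{N(q-2)}$ before the same H\"older step. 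Since $r_*$ has all moments, the worse exponent in your version is harmless, and your route is shorter; both cost an arbitrarily small loss of stochastic integrability, which is why the target is $\expec{\|\nabla\phd\|_{L^q}^{qq'}}$ for all $q'$ rather than a single moment.

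There is, however, one genuine oversight in your Step~2. You claim the small-scale estimate $\sup_{B_{\delta/d}(x)\cap\Om}|\nabla\phd|\le C\big((\fint_{\Om\cap B_\delta(x)}|\nabla\phd|^2)^{1/2}+C'\big)$ holds ``with deterministic constants.'' The hardcore condition does make the \emph{geometry} (number and separation of inclusions, hence the $C^{1,\beta}$-modulus $K$) deterministic and uniform in $\delta$ and in the realization, but the blown-up equation \eqref{eq.diel-res-st} carries the flux $f(x+\delta z)\td(x+\delta z)$ and the source $\delta\,\td\cdot\nabla f$, and $\td=\nabla\psi(\cdot/\delta)$ is a \emph{random} field whose local H\"older/Sobolev norms enter the Li--Vogelius and De Giorgi--Nash--Moser constants. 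The correct small-scale bound therefore has additional random terms $\|\nabla\psi\|_{H^1(B_2(x/\delta))}+\|g\|_{C^{0,\alpha}(B_2(x/\delta))}$ on the right-hand side (elliptic regularity for \eqref{eq.psi-rand} gives $\|\nabla\psi\|_{C^{1,\alpha}(B(x))}\le C(\|\nabla\psi\|_{H^1(B_2(x))}+\|g\|_{C^{0,\alpha}(B_2(x))})$), and your Step~3 must then integrate the $q$-th powers of these fields over $\Om$ and control the resulting expectation by stationarity together with the moment bound $\expec{|\nabla\psi|^{qq'}}\le C_{q,q'}\|g\|_{L^\infty}^{qq'}$ (itself obtained from $\expec{|\nabla^2\psi|^2}\le\expec{g^2}$, deterministic Calder\'on--Zygmund for the Laplacian, and the ergodic theorem). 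Without this, your combination step silently drops a random contribution. Separately, your closing claim that almost-sure boundedness along a subsequence ``follows by Markov's inequality'' should really invoke Borel--Cantelli with the full family of moments in $q'$; as stated, a single Markov bound does not give an almost-sure statement along a whole sequence.
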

\begin{proof}

\noindent{\sf Step 1.} First, we apply the large-scale Calder\'on-Zygmund estimates of \cite[Theorem~7.7]{AKM-book}   to \eqref{eps2}.
In this random setting, the estimate involves a stationary random field $r_*\ge 1$ on $\R^N$, which, in the examples considered above, satisfies $\expec{\exp(\frac1C r_*(0))}\le 2$
for some finite constant $C$, cf.~\cite[Theorem~4]{GNO-reg} (by stationarity, this holds
for $r_*(0)$ replaced by $r_*(x)$ for all $x\in \R^N$).
We then obtain for all $q\ge 2$, with the short-hand notation $B_{*,\delta}(x):=B_{\delta r_*(\frac x \delta)}(x)$
\begin{multline*}
\int_{\Om} \Big(\fint_{B_{*,\delta}(x)} \chi_\Om(z) |\nabla (\ph^\delta-\phi)|^2(z)\ dz\Big)^\frac q2\ dx \\[2mm] \le \, C_q  \Big( \| \nabla (\ph^\delta-\phi) \|_{L^2(\Om)}^q+ \|f\td-\ed\nabla \phi+\nabla \zd\|_{L^q(\Omega)}^q\Big).
\end{multline*}
As in the periodic setting, since $\zd$ is the unique $H^1_0(\Om)$-solution to
$\triangle \zd=\nabla f\cdot\td$, we have by maximal regularity for the Laplacian
$$
\|f\td-\ed\nabla \phi+\nabla \zd\|_{L^q(\Omega)}^q \,\le \, C \Big((\|f\|_{C^{1,\alpha}(\Omega)}+1)
\|\td\|_{L^q(\Omega)}+\|\phi\|_{C^{1,\alpha}(\Omega)}\Big)^q
$$
and we have to control $\|\td\|_{L^q(\Omega)}$. As opposed to the periodic setting, this is a random quantity.
We proceed in two steps. First, taking the derivative of \eqref{eq.psi-rand}
and using deterministic Calder\'on-Zygmund estimates for the Laplacian,
we have
for all $R\ge 1$ %
$$
\fint_{B_R} |\nabla^2 \psi|^{qq'}  \, \le \, C_{q,q'}\Big(
\Big(\fint_{B_{2R}} |\nabla^2 \psi|^{2} \Big)^{qq'/2}+\fint_{B_R} |g|^{qq'} \Big)
$$
so that by taking the limit $R\nearrow +\infty$ we obtain by stationarity of these random fields and the ergodic theorem
$$
\expec{ |\nabla^2 \psi|^{qq'}}  \, \le \, C_{q,q'}
\Big(\expec{|\nabla^2 \psi|^{2}}^{qq'/2}+\expec{ |g|^{qq'} }\Big).
$$
Using with \eqref{e.nabla2} and H\"older's inequality in probability, this turns into $\expec{ |\nabla^2 \psi|^{qq'}}  \, \le \, C_{q,q'} \expec{ |g|^{qq'} }$.
Combined with \eqref{e.second-moment} and Poincar\'e's inequality in form of
$
\|\nabla \psi \|_{L^{qq'}(B)} \, \le \, C \Big( \|\nabla^2 \psi \|_{L^{qq'}(B_2)}
+ \|\nabla \psi \|_{L^{2}(B)}\Big),
$
and using the ergodic theorem as above, this yields
\begin{equation} \label{estoc}
\expec{ |\nabla \psi|^{qq'}|}  \, \le \, C_{q,q'}  \|g\|_{L^\infty(\R^N)}^{qq'},
\end{equation}
that is, the desired control
$
\expec{\|\td\|_{L^{q}(\Omega)}^{qq'}} \, \le\, C_{q,q'}  \|g\|_{L^\infty(\R^N)}^{qq'}.
$
As in the periodic setting, we also have
\[
\int_{\Omega} \Big(\fint_{B_{*,\delta}(x)} \chi_\Omega |\nabla \phi|^2\ dz\Big)^\frac q2\,dx\le\,
 \int_{\R^N} \fint_{B_{*,\delta}(x)} \chi_\Omega |\nabla \phi|^q
\,dzdx \le \, C \int_{\R^N} \chi_\Omega |\nabla \phi|^qdx = \int_{\Omega}   |\nabla \phi|^q\ dx,
\]
where we used \cite[(140)]{GNO-reg} in form of $\int_{\R^N} \fint_{B_{*,\delta}} \sim \int_{\R^N}$.
Hence, we have proved that
\be{eq.estq2sto}
\Expec{\Big(\int_\Om\Big(\fint_{B_{*,\delta}(x)} \chi_\Omega |\nabla \ph^\delta|^2\ dz\Big)^\frac q2\,dx\Big)^{q'}} \le C_{q,q',g}
\ee
for some finite constant $C_{q,q'}$ depending on $q,q'$ and on $\|f\|_{C^{1,\alpha}(\Om)}$, $\|\phi\|_{C^{1,\alpha}(\Om)}$, and $\|g\|_{L^\infty}$.
As opposed to the periodic setting, we have to reformulate this estimate in order to remove the stochastic dependence of the local averages upon the random field $r_*$.
The rest of this step is dedicated to the proof of
\be{eq.estq2sto+}
\Expec{\Big(\int_\Om\Big(\fint_{B_{\delta}(x)} \chi_\Omega |\nabla \ph^\delta|^2\ dz\Big)^\frac q2\,dx\Big)^{q'}} \le C_{q,q',\Omega,g}.
\ee
As customary in the field, this can be done at the price of some (arbitrarily small) loss of stochastic integrability (the dependence of the constants in \eqref{eq.estq2sto} and \eqref{eq.estq2sto+} with respect to $q$ and $q'$ are different). Since we are not interested in the precise stochastic integrability in this contribution, we display an elementary (and suboptimal) proof of this improvement of \eqref{eq.estq2sto}.
The argument relies on the estimate
%
\begin{multline}\label{e.virer*}
\int_\Om \Big(\fint_{B_{\delta}(x)} \chi_\Om |\nabla \ph^\delta|^2\ dz\Big)^\frac q2\,dx
\\
\le \, C\big(\frac{\delta\inf_{\Om}  r_*(\tfrac \cdot\delta)^N}{|\Om|}+1\big) \int_{\Om}r_*(\tfrac x \delta)^{N(q-2)}\Big(\fint_{B_{*,\delta}(x)} \chi_\Om |\nabla \ph^\delta|^2\ dz\Big)^\frac q2\,dx
\end{multline}
in favor of which we presently argue (note that the averages on the left-hand side are made on balls of fixed radius $\delta$).
Following  \cite{GNO-reg}, we replace the integral of local averages by a sum on a partition.   In particular, by \cite[(139)]{GNO-reg}, there exists a partition of $\R^N$ into a family of cubes  $\calQ:=\{Q\}_Q$ such that $\sup_Q r_* \le C \inf_Q r_*$, $\diam(Q) \sim \inf_Q r_*$ and for all functions $h\ge 0$ and exponents $\gamma\ge 1$ we  have $\int_{\R^N} \big(\fint_{B_*(x)}h\big)^\gamma \sim \sum_Q |Q| (\fint_Q h)^\gamma$.
Denote by $\calQ_\delta(\Om)$ the smallest subset of $\calQ$ which contains $\frac 1\delta \Om$ in the sense that $\frac1\delta \Om \subset \cup_{Q \in \calQ_\delta(\Om)}Q$.
For convenience, we call $Q_\delta(x)$ the cube of radius $\delta$ centered at $x\in \R^N$.
Then we have by the discrete $\ell^1-\ell^\gamma$ estimate
\begin{eqnarray*}
\int_\Om \Big(\fint_{B_{\delta}(x)} h\chi_\Om\Big)^\gamma dx
&\le & C \sum_{Q \in \calQ_\delta(\Om)} \int_{\delta Q}  \Big(\fint_{Q_{\delta}(x)} h\chi_\Om\Big)^\gamma  dx
\\
&\le & 3^NC  \sum_{Q \in \calQ_\delta(\Om)} \diam(Q)^{(\gamma-1) N} \int_{\delta Q}  \Big(\fint_{\delta Q} h\chi_\Om\Big)^\gamma  dx.
\end{eqnarray*}
We now distinguish two cases: If $\sup_{Q \subset \calQ_\delta(\Om)} \diam(Q) \le \tfrac1\delta \diam(\Omega)$, then  \cite[(146)]{GNO-reg} combined with the property $\sup_Q r_* \le C \inf_Q r_*$ yields
$$
\int_\Om \Big(\fint_{B_{\delta}(x)} h\chi_\Om\Big)^\gamma dx \,\le \, C
\int_{\Om} r_*(\tfrac x\delta)^{(\gamma-1) N} \Big(\fint_{B_{*,\delta}(x)}h(z)\chi_\Om(z)\ dz\Big)^\gamma dx,
$$
whereas if $\sup_{Q \subset \calQ_\delta(\Om)} \diam(Q) > \tfrac1\delta \diam(\Omega)$,
then
$$
\int_\Om \Big(\fint_{B_{\delta}(x)} h\chi_\Om\Big)^\gamma dx \,\le \,
C\big(\frac{\delta\inf_{\Om}  r_*(\tfrac \cdot\delta\big)^N}{|\Om|}+1)\int_{\Om} r_*(\tfrac x\delta)^{(\gamma-1) N} \Big(\fint_{B_{*,\delta}(x)}h(z)\chi_\Om(z)\ dz\Big)^\gamma dx.
$$
Applied to $h =|\nabla \ph^\delta|^2$ and $\gamma=\frac q2$ this proves \eqref{e.virer*}.
We conclude by deriving \eqref{eq.estq2sto+} from \eqref{e.virer*}.
To that end, we control the infimum of $r_*$ on $\frac1\delta \Om$ by
its average, use several times H\"older's inequality in probability,
the moment bound $\expec{\exp(\frac1C r_*)} \le 2$, and  the triangle inequality in probability in the form $\expec{\big(\int_{\Omega} |h|)^{q'}}^\frac1{q'}
\le \int_{\Omega} \expec{|h|^{q'}}^\frac1{q'}$. This yields
\begin{eqnarray*}
&&\Expec{\Big(\int_\Om \Big(\fint_{B_{\delta}(x)} \chi_\Om |\nabla \ph^\delta|^2\Big)^\frac q2\,dx\Big)^{q'}}^\frac1{q'}
\\
&\le& C \Expec{\Big(1+\delta \fint_\Om r_*(\tfrac \cdot \delta)\Big)^{2q'}}^\frac1{2q'}\Expec{\Big(\int_\Om r_*(\tfrac x\delta)^{\frac{q-2}{2} N} \Big(\fint_{B_{*,\delta}(x)} \chi_\Om |\nabla \ph^\delta|^2\Big)^\frac q2\,dx\Big)^{2q'}}^\frac1{2q'}
\\
&\le &C_{q,q'}\int_{\Om}\Expec{r_*(\tfrac x \delta)^{Nq'(q-2)}\Big(\fint_{B_{*,\delta}(x)} \chi_\Om |\nabla \ph^\delta|^2\ dz\Big)^{qq'}}^\frac1{2q'} dx
\\
&\le &C_{q,q'}\int_{\Om}\Expec{r_*(\tfrac x \delta)^{2Nq'(q-2)}}^\frac1{4q'}\Expec{\Big(\fint_{B_{*,\delta}(x)} \chi_\Om |\nabla \ph^\delta|^2\ dz\Big)^{2qq'}}^\frac1{4q'} dx
\\
&\le &C_{q,q',\Omega}\Expec{\int_{\Om}\Big(\fint_{B_{*,\delta}(x)} \chi_\Om |\nabla \ph^\delta|^2\ dz\Big)^{2qq'} dx}^\frac1{4q'}.
\end{eqnarray*}
Combined with \eqref{eq.estq2sto}, this entails \eqref{eq.estq2sto+}, which  enables us to control $\nabla \phd$ on scales larger than $\delta$. In a second step, we will derive an estimate for small scales, that is for scales smaller than $\delta$.

\medskip
\noindent{\sf Step 2.} Take a point $x\in \Om$ and consider the cube $Q_{2\delta}(x)$ of side-length $2\delta$ centered at $x$. We blow up equation \eqref{eq.diel2} so as to obtain an equation on {$Q_2(0)$}.

To that effect, we set
$\Phi^\delta(z):=\frac1\delta\phd(x+\delta z)$, which satisfies
\be{eq.diel-res-st}\dvg_z (\e(\frac x\delta+z) \nabla_z\Phi^\delta-\!f(x+\delta z)\td(x+\delta z))\!= -\delta\td(x+\delta z)\!\cdot\!\nabla f(x+\delta z) \mbox{ in } (\Om-\{x\})/\delta\cap {Q_2(0)}.\ee
The only difference with the periodic setting is that $\td$ is now random.
By elliptic regularity for the Laplacian in \eqref{eq.psi-rand}, we have
$$
\| \nabla \psi \|_{C^{1,\alpha}(B(x))} \,\le \, C \Big(\|\nabla \psi \|_{H^1(B_2(x))}+\|g \|_{C^{0,\alpha}(B_2(x))}\Big).
$$
Using this bound, the same argument as in the periodic setting allows to conclude that
\be{eq.est-localsto}
 \sup_{B_{\delta/d}(x)\cap \Om}|\nabla \phd|\le C \Big(\Big(\fint_{\Om\cap B_\delta(x)}|\nabla\phd(y)|^2\ dy\Big)^\frac12+C'+\|\nabla \psi \|_{H^1(B_2(x/\delta))}+\|g \|_{C^{0,\alpha}(B_2(x/\delta))}\Big),\ee
where  $C'$ is a constant that only depends on $\|f\|_{C^{1,\alpha}(\Om)}$ and $\|\phi\|_{C^{1,\alpha}(\Om)}$.

\medskip

\noindent{\sf Step 3.} We combine the estimates obtained in the first two steps as in the
periodic setting. This yields
\begin{multline*}
\Expec{\Big(\int_\Om  |\nabla \ph^\delta|^q\Big)^{q'}} \le C\Expec{\Big(\int_\Om\Big(\fint_{B_{\delta}(x)} \chi_\Omega |\nabla \ph^\delta|^2\ dz\Big)^\frac q2\,dx\Big)^{q'}}
\\
+ \Expec{\Big(\int_\Om \|\nabla \psi \|_{H^1(B_2(x/\delta))}^q+\|g \|_{C^{0,\alpha}(B_2(x/\delta))}^q\Big)^{q'}},
\end{multline*}
which, by \eqref{eq.estq2sto+}, stationarity of $\nabla \psi$ and $g$ and by \eqref{estoc}, entails
$\Expec{\Big(\int_\Om  |\nabla \ph^\delta|^q\Big)^{q'}} \le C_{q,q'}$, as claimed.
\end{proof}

\subsection{Enhancement of the dielectric coefficient}

The successful strategy we used in the periodic setting to prove the enhancement of the dielectric coefficient {in the dilute case} can be implemented in the random setting considered here, using the same $g$ as in the periodic setting around the spherical inclusions. The proof raises additional technicalities, which can all be dealt with as we did above for the other results.
The analysis is again inspired by recent results on the Clausius-Mossotti formula. As opposed to the periodic setting, there are many ways to thin a random point process and reach the dilute regime. As in the periodic setting, one may use geometric dilation and consider $\Pc_\ell= \ell \Pc$, but one may also attach a Bernoulli variable to each point and discard it if the variable is 0 -- this leads to thinning by random deletion.
For general thinning, we refer to the recent work  \cite{DG-20c} on the Einstein formula.
In the case of geometric dilation, \cite{Pertinand} provides tools which extend the results we used above in the periodic setting (this is however more involved since  massive regularization is needed and this generates an additional error term that can be controlled using
results of \cite{GNO-quant}). Likewise, for random deletion, one can combine tools introduced in \cite{DG-16a} with \cite{GNO-quant}.

\section*{Acknowledgements}

{The work of GAF and OLP was supported by the National Science Foundation through the Grant DMREF-1921969.} AG has received funding from the European Union's Horizon 2020 research and innovation programme under grant agreement No 864066.

\end{document}